\DeclareMathOperator{\Stab}{\mathsf{Stab}}
\DeclareMathOperator{\Perf}{\mathsf{Perf}}
\DeclareMathOperator{\id}{\mathsf{id}}
\DeclareMathOperator{\Hom}{\mathsf{Hom}}
\DeclareMathOperator{\End}{\mathsf{End}}
\DeclareMathOperator{\Coh}{\mathsf{Coh}}
\DeclareMathOperator{\QCoh}{\mathsf{QCoh}}
\DeclareMathOperator{\Mod}{\mathsf{Mod}}
\DeclareMathOperator{\dgMod}{\mathsf{dgMod}}
\DeclareMathOperator{\Ac}{\mathsf{Ac}}
\DeclareMathOperator{\rep}{\mathsf{rep}}
\DeclareMathOperator{\fdMod}{\mathsf{mod}}
\DeclareMathOperator{\Char}{char}
\DeclareMathOperator{\Ind}{\mathsf{Ind}}
\DeclareMathOperator{\irr}{\mathsf{irr}}
\DeclareMathOperator{\Aut}{\mathsf{Aut}}
\newcommand{\fk}{{\mathbf k}}
\newcommand{\IE}{\mathbb{E}}
\newcommand{\IF}{\mathbb{F}}
\newcommand{\IR}{\mathbb{R}}
\newcommand{\IU}{\mathbb{U}}
\newcommand{\IZ}{\mathbb{Z}}
\DeclareMathOperator{\Res}{\mathsf{Res}}
\newcommand{\leqnomode}{\tagsleft@true}
\newcommand{\reqnomode}{\tagsleft@false}
\let\dim\relax
\DeclareMathOperator{\dim}{\mathsf{dim}}
\newcommand{\sym}{\mathfrak S}
\newcommand{\cA}{\mathcal A}
\newcommand{\cB}{\mathcal B}
\newcommand{\cF}{\mathcal F}
\newcommand{\cG}{\mathcal G}
\newcommand{\cE}{\mathcal E}
\newcommand{\cC}{\mathcal C}
\newcommand{\cD}{\mathcal D}
\newcommand{\cT}{\mathcal T}
\newcommand{\eps}{\varepsilon}
\renewcommand{\theta}{\vartheta}
\renewcommand{\rho}{\varrho}
\renewcommand{\phi}{\varphi}
\renewcommand{\_}{\underline{\,\,\,\,}}
\newtheorem{theorem}{Theorem}[section]
\newaliascnt{conjecture}{theorem}
  \newaliascnt{proposition}{theorem}
  \newtheorem{prop}[proposition]{Proposition}
  \newaliascnt{lemma}{theorem}
  \newtheorem{lemma}[lemma]{Lemma}
  \newaliascnt{corollary}{theorem}
  \newtheorem{cor}[corollary]{Corollary}
\newaliascnt{propdef}{theorem}
  \newtheorem{propdef}[propdef]{Proposition and Definition}
\theoremstyle{definition}
  \newaliascnt{definition}{theorem}
  \newtheorem{definition}[definition]{Definition}
  \newaliascnt{remark}{theorem}
  \newtheorem{remark}[remark]{Remark}
  \newaliascnt{condition}{theorem}
\newaliascnt{convention}{theorem}
  \newaliascnt{question}{theorem}
  \newaliascnt{example}{theorem}
  \newaliascnt{examples}{theorem}
  \newtheorem{examples}[examples]{Examples}
\begin{document}

\title{Endomorphism Algebras of Equivariant Exceptional Collections}
\author[A. Krug]{Andreas Krug}
\author[E. Nikolov]{Erik Nikolov}
\address{
Institut f\"ur algebraische Geometrie,
Gottfried Wilhelm Leibniz Universit\"at Hannover,
Welfengarten 1,
30167 Hannover,
Germany
}
\email{krug@math.uni-hannover.de, nikolov@math.uni-hannover.de}
\date{March 28, 2024}

\begin{abstract}
Given an action of a finite group on a triangulated category with a suitable strong exceptional collection, a construction of Elagin produces an associated strong exceptional collection on the equivariant category.\ We prove that the endomorphism algebra of the induced exceptional collection is the basic reduction of the skew group algebra of the endomorphism algebra of the original exceptional collection.
\end{abstract}
 
\maketitle 
 
\section{Introduction} 

In algebraic geometry, more precisely in the study of quotient singularities and the McKay correspondence, \emph{equivariant derived categories} often play an important role.\ That is, given an action of a finite group on a variety $X$, there is the notion of $G$-equivariant coherent sheaves on $X$.\ They can be seen as generalisations of vector bundles over $X$ carrying a $G$-action which is compatible with the $G$-action on $X$ and are identified with the coherent sheaves on the quotient stack $[X/G]$.\ The $G$-equivariant coherent sheaves form an abelian category $\Coh_G(X)$ and the associated bounded derived category $D^b_G(X):=D^b(\Coh_G(X))$ can be considered.

Elagin gave a method to construct exceptional collections (or sequences) on $D^b_G(X)$ out of suitable exceptional sequences on $D^b(X)$:
\begin{theorem}[{\cite[Thm.\ 2.3]{Ela}}]\label{thm:intro1}
Let $G$ act on a smooth variety $X$ over an algebraically closed field  of characteristic zero or coprime to $|G|$.\ Assume that $D^b(X)$ possesses a full exceptional collection of the form
\begin{equation}\label{eq:originalECintro}
D^b(X)=\bigl\langle E_{1,1},\dots, E_{1,\ell_1},E_{2,1},\dots,E_{2,\ell_2},\dots, E_{k,1},\dots,E_{k,\ell_k}\bigr\rangle
\end{equation}
such that for every $i=1,\dots,k$, there is a transitive $G$-action on the index set $\{1,\dots,\ell_i\}$ with the property that $g_*E_{i,\ell_i}\cong E_{i,g(\ell_i)}$.\ Let $H_i=\Stab_{G}(1)$ be the stabiliser of the first member of the $i$-th block.\ Assume that for every $i=1,\dots,k$, there exists an $H_i$-equivariant object $\cE_i\in D^b_{H_i}(X)$ with underlying non-equivariant object $E_{i,1}$.\ Then there is an induced full exceptional collection on $D^b_G(X)$, namely
\[
D^b_G(X)=\Bigl\langle \bigl(\Ind_{H_1}^G(\rho \otimes \cE_1)\bigr)_{\rho\in \irr(H_1)}, \bigl(\Ind_{H_2}^G(\rho \otimes \cE_2)\bigr)_{\rho\in \irr(H_2)}, \dots, \bigl(\Ind_{H_k}^G(\rho \otimes \cE_k)\bigr)_{\rho\in \irr(H_k)}\Bigr\rangle\,.
\]
\end{theorem}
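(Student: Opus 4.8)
The plan is to deduce the statement from two structural principles about finite group actions on triangulated categories: the descent of a $G$-invariant semiorthogonal decomposition to the equivariant category, and an induction equivalence along a single orbit. I would begin with an elementary but useful observation about the blocks. Writing $\cB_i=\sod{E_{i,1},\dots,E_{i,\ell_i}}$, the collection \eqref{eq:originalECintro} is a semiorthogonal decomposition $D^b(X)=\sod{\cB_1,\dots,\cB_k}$, and the hypothesis $g_*E_{i,j}\cong E_{i,g(j)}$ makes each $\cB_i$ invariant under every autoequivalence $g_*$. Moreover, within a block the members are \emph{completely} orthogonal: if $\Ext^\bullet(E_{i,a},E_{i,b})\neq 0$ for some $a\neq b$, then applying powers of an element $g$ with $g(a)=b$ and using $\Ext^\bullet(E_{i,c},E_{i,d})\cong\Ext^\bullet(E_{i,g(c)},E_{i,g(d)})$ produces, along the cyclic orbit of indices, a nonzero $\Ext^\bullet$ in the forbidden direction, contradicting exceptionality. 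Hence $\cB_i\simeq D^b(\fk)^{\oplus\ell_i}$, with $G$ permuting the summands transitively through the orbit $G/H_i$.

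The first key step is to show this $G$-invariant decomposition equivariantizes, i.e. $D^b_G(X)=\sod{\cB_1^G,\dots,\cB_k^G}$, where $\cB_i^G\subseteq D^b_G(X)$ is the full subcategory of equivariant objects sent into $\cB_i$ by the forgetful functor $\For$. Semiorthogonality is inherited because, with $|G|$ invertible in $\fk$, morphisms in $D^b_G(X)$ are the $G$-invariants of morphisms in $D^b(X)$, so the vanishing $\Ext^\bullet(\cB_j,\cB_i)=0$ for $j>i$ passes to the invariant subspaces; this reduces everything to describing a full exceptional collection on each $\cB_i^G$. The second, and central, step is an induction equivalence for a single block: fixing $i$ and writing $H=H_i$, the transitive permutation action on the summands of $\cB_i\simeq D^b(\fk)^{\oplus\ell_i}$ identifies $\cB_i^G$ with the $H$-equivariantization of the single summand $\langle E_{i,1}\rangle$, the categorical analogue of $\Coh_G(G/H)\simeq\rep(H)$, the equivalence being realised by $\Ind_H^G$. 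Since the lift $\cE_i$ equips $\langle E_{i,1}\rangle\simeq D^b(\fk)$ with an $H$-action, one obtains $\cB_i^G\simeq D^b_H(\pt)=D^b(\rep H)$. As $\Char\fk\nmid|H|$, the algebra $\fk[H]$ is semisimple, so $\rep H$ is semisimple with simple objects $\{\rho:\rho\in\irr(H)\}$, which by Schur's lemma form a completely orthogonal exceptional collection; transporting $\rho$ through $\rho\mapsto\rho\otimes\cE_i$ and then through the induction equivalence yields exactly the objects $\Ind_H^G(\rho\otimes\cE_i)$.

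Finally I would reassemble the blockwise collections into the global decomposition of the first step to obtain the stated collection on $D^b_G(X)$. Its fullness follows from fullness of \eqref{eq:originalECintro}, since the $\cB_i$ generate $D^b(X)$ and the forgetful functor $\For$ is conservative and detects generators when $|G|$ is invertible, so the $\cB_i^G$ generate $D^b_G(X)$.

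I expect the main obstacle to be the induction equivalence $\cB_i^G\simeq(\langle E_{i,1}\rangle)^{H}$: making it precise and compatible with the triangulated and equivariant structures, and in particular computing the $\Hom$-spaces between the induced objects $\Ind_H^G(\rho\otimes\cE_i)$ via Frobenius reciprocity together with a Mackey-type decomposition, so as to confirm both the exceptionality inside each block and the correct interaction with the blockwise decomposition. By contrast, the descent of the semiorthogonal decomposition and the representation-theoretic input are essentially formal once $|G|$ is invertible in $\fk$.
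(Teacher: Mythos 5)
Your outline is correct, but it follows a genuinely different route from the paper's — in essence you reconstruct Elagin's original argument, which the paper deliberately bypasses. You descend the $G$-invariant semiorthogonal decomposition into blocks $\cB_i$ to the equivariant category, identify each $\cB_i^G$ with $D^b(\rep H_i)$ via the induction equivalence along the orbit $G/H_i$ (using the linearisation $\cE_i$ to trivialise the $H_i$-action on $\langle E_{i,1}\rangle\simeq D^b(\fk)$), and read off the exceptional objects as the simples of the semisimple category $\rep(H_i)$. The paper never invokes descent of semiorthogonal decompositions or the induction equivalence: exceptionality and semiorthogonality of the objects $F_{i,\rho}=\Ind_{H_i}^G(\rho\otimes\cE_i)$ are checked by a direct computation of $\Hom^*(F_{i,\rho},F_{j,\rho'})$ using adjunction, the coset formula \eqref{eq:Ind}, the complete orthogonality \eqref{eq:fullortho} within each block (which you also establish, by the same cyclic-orbit argument), and Schur's lemma (\autoref{lem:inducedEC}); fullness is obtained by showing that $\IF$ and the compact generator $\Ind(\IE)$ have the same indecomposable summands up to multiplicity (\autoref{lem:compact}, \autoref{lem:comparisonsummand}, \autoref{thm:fullEC}). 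The paper's route buys two things: the summand comparison is exactly the engine of the main theorem (the Morita equivalence $\End(\IF)\simeq\bigl(G\ltimes\End(\IE)\bigr)^b$), and the computations extend verbatim to non-closed fields and dg-enhanced triangulated categories, where the induced collection is only weakly exceptional. Your route is more structural and yields the block decomposition of $D^b_G(X)$ for free, but its central step — the induction equivalence $\cB_i^G\simeq\langle E_{i,1}\rangle^{H_i}$, which you rightly flag as the main obstacle — is nontrivial to set up in the triangulated setting and is precisely the machinery the paper avoids; note also that your descent step only gives semiorthogonality of the $\cB_i^G$, so the generation half of the decomposition still rests on the adjunction/conservativity argument for $\For$ and $\Ind$ sketched in your last paragraph, i.e.\ on the content of \autoref{lem:compact}.
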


For some of the occurring notions, in particular the induction functors $\Ind_{H_i}^G$, we have to refer to \autoref{subsect:equicat} below.\ For the purpose of this introduction it suffices to keep in mind that an exceptional collection on $D^b(X)$ fulfilling some compatibility with the $G$-action yields an exceptional collection on $D^b_G(X)$ in a canonical way.\ This has been used quite a lot to construct new exceptional collections.\ For example, it follows that an exceptional collection on a smooth projective surface $S$ canonically induces exceptional collections on the Hilbert schemes of points $S^{[n]}$ for every positive integer $n$; see \cite[Sect.\ 4]{KSos} and \autoref{subsect:symmquotstacks}.

There are at least two main reasons why exceptional collections are studied.\ Firstly, they induce the finest possible semi-orthogonal decompositions of derived categories.\ The second reason is that if they are \emph{strong} (i.e.\ they fulfill an additional Ext-vanishing condition, see \autoref{def:EC}(iv)), they induce an equivalence between $D^b(X)$ (or $D^b_G(X)$) with the bounded derived category $D^b(A)$ of some path algebra $A$ of a quiver with relations; see \cite{Bon}.\ More precisely, $A$ is the endomorphism algebra of the \emph{tilting object} given by the direct sum of all members of the exceptional collection.

It is easy to check that if the exceptional collection \eqref{eq:originalECintro} is strong, the same holds for the induced
exceptional collection on $D^b_G(X)$.\ Hence it is desirable to compute the endomorphism algebra of its direct sum.\ In particular one can ask: 
\begin{center}
\begin{small}
\enquote{\emph{Is there a universal formula expressing the endomorphism algebra of the induced equivariant collection in terms of the endomorphism algebra of the original exceptional collection?}}
\end{small}
\end{center}
\vspace{3pt}
Our main result is an affirmative answer to this question, namely:
\begin{theorem}\label{thm:intro2}
Under the assumptions of \autoref{thm:intro1}, let $\IE$ denote the direct sum of the members of the exceptional collection \eqref{eq:originalECintro}, and let $\IF$ denote the direct sum of the members of the induced exceptional collection on $D^b_G(X)$.\ Assume also that
\eqref{eq:originalECintro} is strong.\ Then
\[
\End(\IF) \cong \bigl(G\ltimes \End(\IE)\bigr)^b  \,.
\] 
\end{theorem}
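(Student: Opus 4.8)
The plan is to realise both algebras as endomorphism algebras of projective generators over one and the same (in general non-basic) algebra, the skew group algebra $G\ltimes A$ with $A:=\End(\IE)$, and then to read off the statement from Morita theory.

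First, since \eqref{eq:originalECintro} is a full strong exceptional collection, the tilting object $\IE$ gives by \cite{Bon} an equivalence $\Phi=\Hom(\IE,-)\colon \Db(X)\xrightarrow{\sim}\Db(\mod A)$ that carries each member $E_{i,j}$ to the indecomposable projective $P_{i,j}:=e_{i,j}A$ attached to the corresponding vertex idempotent. The compatibility $g_*E_{i,\ell_i}\cong E_{i,g(\ell_i)}$ says precisely that $\IE$ is $G$-invariant, so the $G$-action on $\Db(X)$ transports through $\Phi$ to a $G$-action on $\Db(\mod A)$ induced by an honest $G$-action on $A$ permuting the idempotents $\{e_{i,j}\}$ within each block. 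Equivariantising both sides, which is compatible with the equivalence because $\Char\fk$ is zero or coprime to $|G|$, then identifies $\Db_G(X)$ with the equivariant derived category $\Db_G(\mod A)\simeq\Db(\mod(G\ltimes A))$, via the standard identification of $G$-equivariant $A$-modules with $(G\ltimes A)$-modules.

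Next I would follow Elagin's generators along this composite equivalence. Forming the $H_i$-equivariant lift $\cE_i$ of $E_{i,1}$ and then applying $\Ind_{H_i}^G$ should correspond, on the module side, to equipping the projective $P_{i,1}$ with its $H_i$-equivariant structure $\widetilde P_{i,1}$ and inducing, so that $\Ind_{H_i}^G(\rho\otimes\cE_i)$ is sent to the $(G\ltimes A)$-module $\Ind_{H_i}^G(\rho\otimes\widetilde P_{i,1})$. By the representation theory of skew group algebras when $|G|$ is invertible, as $i$ ranges over the blocks and $\rho$ over $\irr(H_i)$ these are exactly the indecomposable projective $(G\ltimes A)$-modules, each appearing once — in agreement with the induced collection being exceptional, hence multiplicity-free. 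Therefore the equivariantised $\Phi$ sends $\IF$ to a basic projective generator $(G\ltimes A)e$, and
\[
\End(\IF)\;\cong\;\End_{G\ltimes A}\bigl((G\ltimes A)e\bigr)\;\cong\;e\,(G\ltimes A)\,e\;=\;(G\ltimes A)^b,
\]
the last equality being the definition of the basic reduction.

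The crux is the middle step: checking that the geometric induction $\Ind_{H_i}^G$ of \autoref{thm:intro1} is intertwined by $\Phi$ with the algebraic induction of equivariant modules, and, more seriously, that the modules $\Ind_{H_i}^G(\rho\otimes\widetilde P_{i,1})$ range without repetition over all indecomposable projectives of $G\ltimes A$. A subtler technical point is that transporting the $G$-action through $\Phi$ a priori yields an action only up to a choice of coherence isomorphisms; one must verify that the associated $2$-cocycle is trivial, which is guaranteed exactly by the existence of the honest $H_i$-equivariant lifts $\cE_i$. A more computational alternative bypasses $\Db(X)$ entirely: evaluate $\End(\IF)$ directly using the adjunction $\Hom(\Ind u,v)\cong\Hom(u,\Res v)$ together with a Mackey decomposition over the double cosets $H_i\backslash G/H_{i'}$, and then match the resulting multiplication with that of $e(G\ltimes A)e$.
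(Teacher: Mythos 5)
Your proposal is correct in outline but takes a genuinely different route from the paper, and it is worth spelling out where the weight of the argument sits in each. You transport everything to the module side: equivariantise the tilting equivalence $\Hom(\IE,-)$, identify $\cD^G$ (resp.\ $D^b_G(X)$) with $D(G\ltimes A)$ for $A=\End(\IE)$, and argue that $\IF$ lands on a basic projective generator of $G\ltimes A$, so that $\End(\IF)\cong e(G\ltimes A)e\cong (G\ltimes A)^b$. The paper argues in the mirror direction: it stays inside $\cD^G$, shows that $\Ind(\IE)$ is a tilting object with $\End(\Ind(\IE))\cong G\ltimes A$ (\autoref{cor:specialequitilt}), and then proves directly that $\Ind(\IE)\cong\bigoplus_{i,\rho}F_{i,\rho}^{\oplus n_{i,\rho}}$ with all $n_{i,\rho}\ge 1$ (\autoref{lem:comparisonsummand}); under $\Hom^\bullet(\IF,-)$ this exhibits $\Ind(\IE)$ as a progenerator over $\End(\IF)$, giving the Morita equivalence, and basicness of $\End(\IF)$ (\autoref{lem:ECbasic}) finishes the proof. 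Your two flagged caveats are both real and both resolvable exactly as you suspect: the potential $2$-cocycle obstruction to getting an honest $G$-action on $A$ vanishes because the $H_i$-linearisations $\cE_i$ assemble via \eqref{eq:blocklin} into a genuine $G$-linearisation of $\IE$; and the intertwining of geometric with algebraic induction follows since the equivariantised equivalence commutes with $\Res$, hence with its adjoint $\Ind$.

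The one point where your write-up falls short of a proof is precisely the step you yourself identify as the crux: that the modules $\Ind_{H_i}^G(\rho\otimes\widetilde P_{i,1})$ exhaust the indecomposable projectives of $G\ltimes A$ without repetition. You outsource this to ``the representation theory of skew group algebras'', but this fact \emph{is} the mathematical content of the theorem: indecomposability and pairwise non-isomorphism of these modules is the paper's \autoref{lem:inducedEC} (and over a non-algebraically-closed $\fk$ this needs the Schur-lemma/division-algebra care taken there, since the collection is then only weakly exceptional), while the exhaustion statement is \autoref{lem:comparisonsummand} transported to the module side (decompose $G\ltimes A=\Ind_1^G(A)$ using $\Ind_1^G\cong\Ind_1^G\circ g_*$ and $\Ind_1^{H_i}\Res^1_{H_i}\cE_i\cong\fk\langle H_i\rangle\otimes\cE_i$). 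The paper's Section on Morita versus derived equivalence makes exactly this point: the transport-and-equivariantise argument gives a derived equivalence $D(\End(\IF))\cong D(G\ltimes A)$ for free, but upgrading it to a Morita equivalence --- i.e.\ knowing that $\IF$ lands on an honest projective generator rather than merely a tilting complex --- requires the explicit summand comparison either way. So your route is viable and arguably conceptually cleaner if one is willing to cite Clifford theory for skew group algebras in the required generality, but it does not avoid the computation; it relocates it.
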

Here $G\ltimes \End(\IE)$ stands for the \emph{skew group algebra} and
$\bigl(G\ltimes \End(\IE)\bigr)^b$ denotes its \emph{basic reduction}, i.e.\ the unique basic algebra Morita equivalent to $G\ltimes \End(\IE)$.\ See \autoref{subsect:basic} and \autoref{subsect:skew} for details on these notions.\ 

Let us outline the proof.\ As a path algebra with relations, $\End(\IF)$ is automatically basic.\ See \autoref{lem:ECbasic} for a slight generalisation of this fact to weakly exceptional sequences.\ Hence it suffices to prove that $\End(\IF)$ is Morita equivalent to the skew group algebra.\ For this purpose, the main observation is that there is actually a much easier construction than \autoref{thm:intro1} to turn a linearisable tilting object $\IE$ of $D^b(X)$ into a tilting object of $D^b_G(X)$ - just by applying the induction functor; see \autoref{cor:equitilt}. In particular, $\Ind(\IE)$ is a tilting object of $D^b_G(X)$.\ We compute the endomorphism algebra of this tilting object to be the skew group algebra $G\ltimes \End(\IE)$ in \autoref{cor:specialequitilt}.\ Now the key step is \autoref{lem:comparisonsummand}, proving that $\IF$ and $\Ind(\IE)$ have the same indecomposable summands in their Krull--Remak--Schmidt decompositions (although the summands show up in $\Ind(\IE)$ with higher multiplicities).\ From this it follows that under the equivalence $D^b_G(X)\cong D^b(\End(\IF))$ induced by the exceptional sequence, $\Ind(\IE)$ gets mapped to a progenerator of $\Mod(\End(\IF))\subset D^b(\End(\IF))$.\ This implies the desired Morita equivalence between $\End(\IF)$ and $\End\bigl(\Ind(\IE)\bigr)\cong G\ltimes \End(\IE)$.\par \vspace{0.3cm}

Actually, we work in greater generality than that of \cite{Ela} and prove a straightforward generalisation of \autoref{thm:intro1}, namely \autoref{thm:fullEC}.\ Instead of working with derived categories of coherent sheaves on varieties, we work with dg-enhanced triangulated categories; see \autoref{subsect:setupcat} for the details.\ Furthermore, we remove the assumption that $\fk$ is algebraically closed (but need to keep the assumption that $\Char\fk$ is coprime to $|G|$).\

The only difference if $\fk$ is not algebraically closed is that the induced equivariant collection is only \emph{weakly} exceptional; see \autoref{def:EC}(i) for this notion.\ Even then, the description of its endomorphism algebra as the basic reduction of the skew group algebra remains unchanged.\ See \autoref{thm:main}, which is the analogue of \autoref{thm:intro2} in the generalised set-up.

That the induced collection in our generalised set-up is still (weakly) exceptional is a straightforward computation, similar to computations of \cite{Ela}; see \autoref{lem:inducedEC}.\ The fullness of the collection follows from the comparison with the tilting object $\Ind(\IE)$ in \autoref{lem:comparisonsummand} that has to be carried out anyway to obtain the main result.

Parts of the results of this paper were already achieved in E.\ N.'s Master thesis under the supervision of A.\ K.

\section{Preliminaries}

\subsection{Categorical set-up}\label{subsect:setupcat}

Let $\fk$ be a field.\
Throughout, let $\cD$ denote a $\fk$-linear triangulated category which is \emph{cocomplete}.\ This ensures that arbitrary set-valued direct sums (not only finite ones) in $\cD$ exist.\ Furthermore, we assume that the category $\cD$ has a dg-enhancement $\widehat \cD$.\ This is a pretriangulated dg-category with $[\widehat \cD]=\cD$. For details on dg-enhanced categories, see for example \cite[Sect.\ 3]{KuzLunts}.

For $E,F\in \cD$, we write $\Hom^*(E,F) := \bigoplus_{i\in \IZ}\Hom_{\cD}(E,F[i])$.\ For $E, F\in \widehat\cD$, we write $\Hom^\bullet(E, F):=\Hom_{\widehat\cD}(E, F)$ which is a complex of vector spaces.\ Note that the objects in $\widehat \cD$ and $[\widehat\cD]=\cD$ are the same and that furthermore $\Hom^*(E,F)\cong \mathcal H^*\bigl(\Hom^\bullet(E,F)\bigr)$.  

We denote by $\cD^c$ the thick triangulated subcategory of \emph{compact} objects, i.e. those objects $F\in \cD$ such that $\Hom(F,\_)$ commutes with arbitrary direct sums.\ See e.g.\ \cite[Sect.\ 2.3]{KuzLunts} for details.\ From \autoref{subsect:basic} on, we make the assumption that $\cD^c$ is \emph{Hom-finite} (also called \emph{proper}), meaning that $\Hom^*(E,F)$ is a finite-dimensional $\fk$-vector space for all $E,F\in \mathcal D^c$. 

\begin{examples}\label{ex:D}
Let us list some classes of categories $\cD$ meeting the above assumptions.
\begin{enumerate}
\item If $\cG$ is a $\fk$-linear Grothendieck abelian category, its derived category $\cD=D(\cG)$ is a dg-enhanced cocomplete $\fk$-linear triangulated category.\ A dg-enhancement is given by the category of h-injective complexes over $\cG$; see \cite[Ex.\ 3.4]{CS--dgtour}.\ Two particularly important special cases of this are the following.
\begin{itemize}
\item If $X$ is a seperated scheme of finite type over $\fk$, then $\cG=\QCoh(X)$ is a $\fk$-linear Grothendieck category.\ By \cite[Thm.\ 3.1.1 \& 3.1.3]{BvdB--generators}, the subcategory of compact objects $D(\cG)^c$ agrees with the subcategory of perfect complexes in $D(\cG)$.\ In particular, if $X$ is proper, $D(\cG)^c$ is Hom-finite.\ If $X$ is smooth, then $D(\cG)^c\cong D^b(\Coh(X))$ agrees with the bounded derived category of coherent sheaves.\ In particular, our assumptions are fulfilled in the set-up of \cite{Ela}.
\item Let $A$ be a $\fk$-algebra.\ Our convention is that $A$-modules are always \emph{right} $A$-modules (in particular, $\End_A(A) \cong A$, not $A^{\mathsf{op}}$, as $\fk$-algebras).\ The $A$-modules form a $\fk$-linear Grothendieck category $\cG=\Mod(A)$.\ The subcategory of compact objects $D(\cG)^c$ agrees with the subcategory of perfect complexes in $D(\cG)$; see \cite[\href{https://stacks.math.columbia.edu/tag/07LQ}{07LQ}]{stacks-project}.\ If $A$ is finite-dimensional, $D(\cG)^c$ is Hom-finite.\ If $A$ is of finite global dimension, then $D(\cG)^c\cong D^b(\fdMod(A))$ agrees with the bounded derived category of finite-dimensional $A$-modules.\
\end{itemize}
\item If $\cA$ is a dg-category over $\fk$, its derived category $D(\cA)$ is a dg-enhanced cocomplete $\fk$-linear triangulated category; see e.g.\ \cite[Sect.\ 3.4]{KuzLunts} for details.\ In particular, this includes the derived categories of dg-algebras, which are the same as dg-categories with one object.\ If we specialise further to an ordinary algebra, i.e.\ a dg-algebra $\cA=A$ concentrated in degree $0$, we get back $D(A)=D(\Mod(A))$, where $\Mod(A)$ is the Grothendieck category of ordinary (non dg) modules over $A$.
\end{enumerate}
\end{examples}

From \autoref{subsect:equicat} on, where an action of a finite group $G$ on the category $\cD$ enters the stage, we add the assumption that $\Char \fk$ is zero or coprime to $|G|$.

\subsection{Generators, Tilting Objects and Exceptional Collections}

\begin{definition}
A \emph{compact generator} of $\cD$ is an object $G\in \cD^c$ such that $\Hom^*(G,F)= 0$ only holds for $F\cong 0$.\ This can also be written as $\langle G \rangle^{\perp} = 0$.  
\end{definition}

\begin{prop} \label{DGequivalence}
Let $G$ be a compact generator of $\cD$ and consider the dg-algebra defined as $\cB:=\Hom^{\bullet}(G,G)$.
Then the functor 
\[
\Phi=\Hom^\bullet(G, \_)\colon \cD\xrightarrow\cong D(\cB)  
\]
is an equivalence.
\end{prop}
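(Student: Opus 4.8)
The plan is to realise $\Phi$ as the homotopy-level functor of an honest dg-functor and then recognise it as an equivalence by a standard dévissage against the generator $G$. First I would use the dg-enhancement $\widehat\cD$ to define, for $F\in\widehat\cD$, the complex $\Hom^\bullet(G,F)$ together with its right action of $\cB=\Hom^\bullet(G,G)$ by precomposition; this produces a dg-functor $\widehat\cD\to\dgMod(\cB)$ into right dg-$\cB$-modules. Since every object of $\dgMod(\cB)$ is fibrant for the projective structure, postcomposing with the localisation $\dgMod(\cB)\to D(\cB)$ yields the triangulated functor $\Phi\colon\cD\to D(\cB)$, and by construction $\Phi(G)$ is the free rank-one module $\cB$. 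The identification $\Hom^*\cong\mathcal H^*(\Hom^\bullet)$ together with $\Hom^*_{D(\cB)}(\cB,\cB[i])\cong\mathcal H^i(\cB)$ shows that $\Phi$ induces an isomorphism $\Hom^*_{\cD}(G,G[i])\isomor\Hom^*_{D(\cB)}(\Phi G,\Phi G[i])$ for every $i$. Finally, because $G$ is compact and $\cB$ is compact in $D(\cB)$, the functor $\Phi$ preserves arbitrary direct sums.

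Next I would prove full faithfulness by a two-step dévissage. Consider the class of objects $F\in\cD$ for which $\Phi$ induces an isomorphism $\Hom^*_{\cD}(G[j],F)\to\Hom^*_{D(\cB)}(\Phi G[j],\Phi F)$ for all $j$. This class contains $G$ by the previous paragraph, is closed under shifts and cones (both $\Hom$-functors are cohomological), and is closed under arbitrary coproducts, since both $\Hom^*_{\cD}(G[j],\_)$ and $\Hom^*_{D(\cB)}(\cB[j],\_)$ commute with direct sums by compactness and $\Phi$ preserves coproducts. As $G$ is a compact generator, the smallest localising subcategory containing it is all of $\cD$, so this class is all of $\cD$. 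Repeating the argument in the first variable --- now using that $\Hom^*_{\cD}(\_,F)$ turns coproducts into products on both sides --- shows that $\Phi$ induces an isomorphism on $\Hom^*_{\cD}(E,F)$ for all $E,F\in\cD$, i.e.\ $\Phi$ is fully faithful.

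For essential surjectivity I would observe that, being fully faithful and coproduct-preserving, $\Phi$ has essential image a localising subcategory of $D(\cB)$ containing $\Phi G\cong\cB$. Since $\cB$ is a compact generator of $D(\cB)$, this essential image is all of $D(\cB)$, and hence $\Phi$ is an equivalence. The main technical obstacle is the first paragraph: one must ensure that $\Phi$ is genuinely a triangulated, coproduct-preserving functor into $D(\cB)$ rather than merely an assignment on objects, which is exactly what the dg-enhancement $\widehat\cD$ provides. Once $\Phi$ is available as such a functor, the dévissage and the compact generation of $D(\cB)$ make the rest formal.
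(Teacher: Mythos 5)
Your proof is correct and is essentially the standard derived Morita-theoretic argument; the paper itself gives no argument at this point but simply cites Lunts--Schn\"urer (App.~B), Keller and Lunts--Orlov, whose proofs are precisely this d\'evissage against the compact generator. One cosmetic remark: the preservation of coproducts by $\Phi$ needs only the compactness of $G$ in $\cD$, while the compactness of $\cB$ in $D(\cB)$ is what you actually use later, in the d\'evissage on the target side and for essential surjectivity.
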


More precisely, $\Hom^\bullet(G, \_)$ is a dg functor $\widehat \cD\to \dgMod(\cB)$ to begin with, inducing a functor $\cD=[\widehat \cD]\to [\dgMod(\cB)]$ on the level of the homotopy categories.\ Then $\Phi$ is the composition of this functor with the Verdier quotient $[\dgMod(\cB)]/[\Ac(\cB)]=D(\cB)$.

\begin{proof}
This is \cite[App.\ B]{LS-enhancements} with $P=I=G$ and $z=\id_G$.\ For more classical references, see e.g.\ \cite[Sect.\ 4.2]{Keller-derivingdg} or \cite[Prop.\ 1.16 \& 1.17]{Lunts-Orlov}.
\end{proof}

\begin{remark}
The equivalence $\Phi$ preserves arbitrary direct sums.\ Consequently, it restricts to an equivalence between the triangulated subcategories of compact objects
\[
 \Phi\colon \cD^c\xrightarrow\cong D(\cB)^c=\Perf(\cB)\,;
\]
see \cite[Lem.\ 2.10]{KuzLunts}.
\end{remark}

\begin{definition}
A \emph{tilting object} of $\cD$ is defined to be a compact generator $T\in \cD^c$ such that $\End^*(T)=\Hom^*(T,T)$ is concentrated in degree $0$.
\end{definition}

\begin{remark}\label{rem:ECtilt}
Let $T$ be a tilting object.\ Because the cohomology $\End^*(T)$ of the dg-algebra $\cB=\End^\bullet(T)$ is concentrated in degree $0$, we have a quasi-isomorphism between $\cB$ and the ordinary algebra $B=\End_{\cD}(T)$ without graded or dg structure.\ This quasi-isomorphism induces an equivalence $D(\cB)\cong D(B)$.\ Combining this with \autoref{DGequivalence} gives an equivalence  
\[
\Phi=\Hom^\bullet(T,\_)\colon \cD\xrightarrow \cong D(B)\,.
\]
Note that $\End^\bullet(T)=\Hom^\bullet(T,T)$ is quasi-isomorphic, hence isomorphic in $D(B)$, to the right $B$-module $B$.\ In other words, $\Phi(T)\cong B$.
\end{remark}

\begin{definition}\label{def:EC} Definitions (ii) - (v) below are standard.\ Definition (i) is not new, but also not among the most prominent.\ It occurs in \cite[Def.\ 1.16]{orlovglueing} and implicitly in \cite{ringelexceptional}.
\begin{enumerate}
 \item An object $E\in \cD^c$ is \emph{weakly exceptional} (or \emph{w-exceptional})\footnote{We will mostly use the abbreviation \emph{w-exceptional} in the following.\ The reason is not the desire for brevity, but rather that ``strong w-exceptional collection'' sounds better than ``strong weakly exceptional collection''.} if $\End^*(E)$ is concentrated in degree zero and $\End(E)$ is a finite-dimensional division algebra over $\fk$.
 \item An object $E\in \cD^c$ is \emph{exceptional} provided that
$\End^*(E)$ is concentrated in degree zero and $\End(E)=\mathbf k\cdot \id_E$.
\item A sequence $(E_1,\dots,E_\ell)$ of objects in $\mathcal D^c$ is called a \emph{(weakly) exceptional sequence} if all $E_i$ are (weakly) exceptional, and the semi-orthogonality condition $\Hom^*(E_i,E_j)=0$ for all $i>j$ holds.
\item A (weakly) exceptional sequence $(E_1,\dots,E_\ell)$ is \emph{strong} if
$\Hom^*(E_i,E_j)$ is concentrated in degree zero for all $i<j$ (note that for $i\ge j$, this holds anyway).
\item A (weakly) exceptional sequence $(E_1,\dots,E_\ell)$ is said to be \emph{completely orthogonal} if
$\Hom^*(E_i,E_j)=0$ for all $i<j$ (note that for $i> j$, this holds anyway).
\item A (weakly) exceptional sequence $(E_1,\dots,E_\ell)$ is \emph{full} if the direct sum of its members $\IE=E_1\oplus \dots \oplus E_\ell$ is a compact generator of $\cD$.
\end{enumerate}
\end{definition}

\begin{remark}\label{rem:PhiKRS}
By definition, a w-exceptional sequence $(E_1,\dots,E_\ell)$ is full if and only if the object
$\IE=E_1\oplus \dots \oplus E_\ell$ is a compact generator.\ It is both full and strong if and only if $\IE$ is a tilting object.\ In particular, for a full, strong w-exceptional sequence, we have an equivalence
\[
\Hom^\bullet(\mathbb E,\_)\colon \cD\xrightarrow \cong D(A),\quad \quad A:=\End(\IE)\,.
\]
As the $\End(E_i)$ are division algebras, the objects $E_i$ are indecomposable.\ Thus the same holds true for its images $\Phi(E_i)\cong \Hom(\mathbb{E},E_i)\in \Mod(A)\subset D(A)$ which are isomorphic to $A$-modules, i.e.\ to complexes concentrated in degree zero.\ Consequently,
\[
 A\cong \Phi(\IE)\cong \Phi(E_1)\oplus \dots \oplus \Phi(E_\ell)
\]
is the Krull--Remak--Schmidt decomposition of the right $A$-module $A$.  
\end{remark}

\begin{remark}\label{rem:fingd}
In the literature, the additional assumption that $A=\End(T)$ is of finite global dimension is often made for a tilting object $T$.\ This ensures that the equivalence $\Phi$ maps $\cD^c$ to $D^b(\fdMod(A))$, cf.\ \autoref{ex:D}.\ If $T=\IE$ is the direct sum of a strong full exceptional collection, this is automatic:\ In that case, $A$ is an admissible quotient of a path algebra.
\end{remark}

\subsection{Basic algebras}\label{subsect:basic}

A finite-dimensional algebra $A$ is \emph{basic} if the indecomposable summands of the Krull--Remak--Schmidt decomposition $A\cong P_1\oplus\dots\oplus P_\ell$ of $A$ as a right module over itself are pairwise non-isomorphic.\

\begin{lemma}\label{lem:ECbasic}
Let $E_1,\dots, E_\ell$ be a strong w-exceptional collection and let $\IE:=E_1\oplus \dots\oplus E_\ell$.\ Then the endomorphism algebra $A=\End(\IE)$ is a basic $\fk$-algebra.
\end{lemma}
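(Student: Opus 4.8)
The plan is to work directly with the idempotent decomposition of $A$ induced by the summands $E_i$, and to read off both the indecomposability and the pairwise non-isomorphy of the resulting projective right modules from the w-exceptionality and the semi-orthogonality of the sequence. First I would record that $A$ is finite-dimensional: each $E_i$ lies in $\cD^c$, and since $\cD^c$ is Hom-finite, every $\Hom_\cD(E_i,E_j)$ is finite-dimensional, hence so is $A=\bigoplus_{i,j}\Hom_\cD(E_i,E_j)$. Let $e_i\in A$ be the idempotent obtained by composing the projection $\IE\to E_i$ with the inclusion $E_i\to\IE$. These are orthogonal idempotents with $e_1+\dots+e_\ell=\id_{\IE}$, so that $A=\bigoplus_{i=1}^\ell e_iA$ is a decomposition of the right regular module into the right ideals $P_i:=e_iA$.

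Next I would identify the Hom-spaces between these summands. Using the standard isomorphism $\Hom_A(e_iA,M)\cong Me_i$ (via $f\mapsto f(e_i)$) for a right $A$-module $M$, I obtain $\Hom_A(P_i,P_j)\cong e_jAe_i$, and the identification of $e_jAe_i$ with the $(E_i\to E_j)$-component gives $e_jAe_i\cong\Hom_\cD(E_i,E_j)$. In particular $\End_A(P_i)$ is (anti-)isomorphic to $e_iAe_i\cong\End_\cD(E_i)$, which by w-exceptionality is a finite-dimensional division algebra and thus has no idempotents other than $0$ and $1$; the same holds for its opposite. Hence each $P_i$ is indecomposable, and $A=\bigoplus_i P_i$ is a Krull--Remak--Schmidt decomposition of $A$ as a right module over itself.

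Finally, to conclude that $A$ is basic I would show the $P_i$ are pairwise non-isomorphic. If $P_i\cong P_j$, then both $\Hom_A(P_i,P_j)\cong\Hom_\cD(E_i,E_j)$ and $\Hom_A(P_j,P_i)\cong\Hom_\cD(E_j,E_i)$ are nonzero. By the semi-orthogonality condition $\Hom^*_\cD(E_a,E_b)=0$ for $a>b$ (in particular in degree zero), the first forces $i\le j$ and the second forces $j\le i$, whence $i=j$. This establishes the pairwise non-isomorphy and finishes the proof.

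I would expect no serious obstacle here; the only points requiring care are the bookkeeping of conventions in the chain $\Hom_A(e_iA,e_jA)\cong e_jAe_i\cong\Hom_\cD(E_i,E_j)$, so that the inequalities coming from semi-orthogonality point the right way, and the observation that finite-dimensionality of $A$ is secured by the standing Hom-finiteness assumption. It is worth noting that this argument uses only that the sequence is w-exceptional (for the division-algebra endomorphisms and for semi-orthogonality); neither strongness nor fullness of the collection is actually needed for the conclusion that $\End(\IE)$ is basic.
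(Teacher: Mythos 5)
Your proof is correct, but it takes a genuinely different route from the paper's. The paper deduces the statement from its \autoref{rem:PhiKRS}: under the equivalence $\Phi=\Hom^\bullet(\IE,\_)\colon \cD\to D(A)$ furnished by the tilting object $\IE$, the summands $E_i$ (indecomposable because their endomorphism rings are division algebras) map to the indecomposable summands $\Phi(E_i)$ of the right module $A$, and these are pairwise non-isomorphic because the $E_i$ are. You instead argue entirely inside $A$ via the Peirce decomposition $A=\bigoplus_i e_iA$ attached to the idempotents $e_i=\iota_i\pi_i$, identifying $\Hom_A(e_iA,e_jA)\cong e_jAe_i\cong\Hom_\cD(E_i,E_j)$ and reading off indecomposability of $e_iA$ from the division-algebra condition and non-isomorphy from semi-orthogonality. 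Both arguments are sound; what yours buys is that it is more elementary (no use of the derived equivalence $\Phi$) and visibly more general: as you note, it uses neither strongness nor fullness, whereas the paper's proof leans on \autoref{rem:PhiKRS}, which is stated for \emph{full} strong collections (the paper handles the non-full case only later, by passing to the subcategory generated by $\IE$). What the paper's route buys is brevity and coherence with the rest of the argument, where the equivalence $\Phi$ and the Krull--Remak--Schmidt decomposition of $A$ in $\Mod(A)$ are needed anyway (e.g.\ in the proof of \autoref{thm:main}). Your side remarks are accurate: finite-dimensionality of $A$ does come from the standing Hom-finiteness of $\cD^c$, and the direction of the isomorphism $\Hom_A(e_iA,e_jA)\cong e_jAe_i$ is exactly the bookkeeping point one must get right, though for your purposes only non-vanishing matters, so either convention suffices.
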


\begin{proof}
By \autoref{rem:PhiKRS}, the Krull--Remak--Schmidt decomposition of $A$ is
\[
 A \cong \Phi(E_1)\oplus \dots \oplus \Phi(E_\ell)\,.
\]
By the semi-orthogonality in a $w$-exceptional collection, the $E_i$ are pairwise non-isomorphic.\ As $\Phi$ is an equivalence, the same remains true for the summands $\Phi(E_i)$ of the Krull--Remak--Schmidt decomposition.
\end{proof}

\begin{propdef}\label{propdef:basic}
For every finite-dimensional $\fk$-algebra $A$, there exists a unique basic algebra $A^b$ up to isomorphism which is Morita equivalent to $A$, meaning that $\Mod(A)\cong \Mod(A^b)$.\ We call $A^b$ the \emph{basic reduction} of $A$.  
\end{propdef}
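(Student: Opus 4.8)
The statement is the classical existence-and-uniqueness of a basic algebra in a Morita equivalence class, so the plan is to exhibit $A^b$ explicitly via an idempotent and then to verify both that it is basic and that it is unique by transporting structure along Morita equivalences. First I would apply the Krull--Remak--Schmidt theorem to $A$ regarded as a right module over itself, writing $A\cong P_1^{\oplus n_1}\oplus\dots\oplus P_r^{\oplus n_r}$ with the $P_i$ pairwise non-isomorphic indecomposable projectives and each $n_i\ge 1$; this is legitimate because $A$ is finite-dimensional, so endomorphism rings of indecomposables are local. Each summand is of the form $P_i\cong e_iA$ for a primitive idempotent $e_i$, and after arranging $e_1,\dots,e_r$ to be orthogonal I would set $e:=e_1+\dots+e_r$, $P:=eA\cong P_1\oplus\dots\oplus P_r$, and \emph{define} $A^b:=eAe$.

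For the Morita equivalence, the key point is that $P$ is a progenerator of $\Mod(A)$: it is projective as a direct summand of $A$, and it is a generator because every indecomposable projective summand of $A$ already occurs in $P$, whence $A\in\mathrm{add}(P)$. Standard Morita theory then makes $\Hom_A(P,\_)$ an equivalence $\Mod(A)\xrightarrow{\sim}\Mod(\End_A(P))$, and using the paper's convention $\End_A(A)\cong A$ one computes $\End_A(eA)\cong eAe=A^b$ (as a $\fk$-algebra, not its opposite), giving $\Mod(A)\cong\Mod(A^b)$. To see that $A^b$ is basic I would transport the decomposition of $P$ through this equivalence: $P$ is sent to the free module $A^b$, and its indecomposable summands $P_i$ are sent to pairwise non-isomorphic indecomposable projective $A^b$-modules $\Hom_A(P,P_i)$, so that $A^b\cong\bigoplus_{i=1}^r\Hom_A(P,P_i)$ is a Krull--Remak--Schmidt decomposition with multiplicity-free summands.

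For uniqueness, suppose $B$ and $B'$ are both basic and Morita equivalent to $A$, hence to each other via a $\fk$-linear equivalence $F\colon\Mod(B)\xrightarrow{\sim}\Mod(B')$. Such an equivalence induces a bijection between isomorphism classes of indecomposable projective modules, and the number $r$ of these classes is a Morita invariant. Since $B$ is basic, $B_B$ is the direct sum of exactly one copy of each of its $r$ indecomposable projectives; applying $F$ produces exactly one copy of each of the $r$ indecomposable projectives of $B'$, which — because $B'$ is basic — is precisely $B'_{B'}$, so $F(B_B)\cong B'$. As a covariant equivalence induces $\fk$-algebra isomorphisms on endomorphism rings, I conclude $B\cong\End_B(B_B)\cong\End_{B'}(F(B_B))\cong\End_{B'}(B')\cong B'$.

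The argument is entirely conceptual, so the only real care needed is bookkeeping rather than any genuine obstacle. The two places to be vigilant are, first, keeping the side conventions straight so that $\End_A(eA)$ is identified with $eAe$ and not with its opposite algebra — this is exactly where the convention $\End_A(A)\cong A$ is invoked — and second, checking that the defining property of \enquote{basic} is preserved under the equivalence, i.e.\ that indecomposability and projectivity are Morita-invariant. I would emphasise that no algebraic closedness of $\fk$ is used anywhere: the $\End(P_i)$ are merely finite-dimensional division algebras, and the whole proof goes through verbatim over an arbitrary field.
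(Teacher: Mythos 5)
Your proof is correct, and it follows exactly the construction the paper indicates: the paper's proof simply cites Morita's theorem and states the same explicit description $A^b\cong\End_A(P_1\oplus\dots\oplus P_\ell)$ coming from the Krull--Remak--Schmidt decomposition of $A_A$, which is precisely the $eAe$ you build and verify. You merely supply in full the standard details (progenerator property, transport of the multiplicity-free decomposition, and uniqueness) that the paper delegates to the textbook literature.
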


\begin{proof}
The result is due to \cite[Thm.\ 7.5]{Morita} and proofs can be found in most textbooks on the representation theory of algebras.\ Note that $A^b$ can computed very explicitly.\ For example, one can consider the Krull--Remak--Schmidt decomposition
\[
 A\cong P_1^{\oplus n_1}\oplus \dots \oplus P_\ell^{\oplus n_\ell}\qquad \text{with $P_i\not\cong P_j$ for $i\neq j$} 
\]
of $A$ as a right module over itself.\ Then $A^b\cong \End_A(P_1\oplus\dots\oplus P_\ell)$. 
\end{proof}

\subsection{Group actions on categories, equivariant categories}\label{subsect:equicat}

From now on, let $G$ denote a finite group, and assume that $\Char\fk$ is zero or coprime to $|G|$.\ We discuss the generalities on categorical group actions and the associated equivariant categories only very briefly.\ For details we refer to \cite{BO--equivariant, Elagin--onequi, Shinder--equivariant}.\ We follow the convention of \cite{BO--equivariant, Shinder--equivariant} that the group acts on the category from the left (covariantly). However, one can easily switch to a right-action as in \cite{Elagin--onequi} by replacing every $g\in G$ by its inverse $g^{-1}$.

An \emph{action} of $G$ on a category $\cC$ is given by a family $\bigl(g_*\colon \cC\xrightarrow\sim \cC\bigr)_{g\in G}$ of autoequivalences together with isomorphisms of functors 
\begin{align*}
 \bigl(\eps_{g,h}\colon g_*\circ h_*\xrightarrow \sim (gh)_*\bigr)_{g,h\in G}
\end{align*}
satisfying a compatibility condition.

One main example is the following.\ Let $X$ be a scheme and $G\le \Aut(X)$.\ Then $G$ acts on $\QCoh(X)$ by pushforwards of sheaves, inducing also a $G$-action on $D(X)$.\ This is the set-up in which \cite{Ela} is formulated, with the only difference that in \emph{loc.\ cit.}\ the group acts by pull-backs, giving a right-action of $G$ on $D(X)$.

Given a $G$-action on $\cC$, there is the associated \emph{equivariant} category $\cC^G$.\ Its objects are pairs $(E,\lambda)$, where $E\in \cC$ and $\lambda$ is a \emph{$G$-linearisation}, i.e.\ a family $\bigl(\lambda_g\colon E\xrightarrow \sim g_*E \bigr)_{g\in G}$ of isomorphisms satisfying the condition $\eps_{g,h}\circ g_*\lambda_h\circ \lambda_g= \lambda_{gh}$.\par
\noindent Given two such pairs $(E,\lambda)$ and $(F,\mu)$, there is a left-action of $G$ on $\Hom_\cC(E,F)$ by conjugation with the linearisations, more precisely
\[
 {}^g\phi:=\mu_g^{-1}\circ g_*\phi\circ \lambda_g\quad\text{for $\phi\in \Hom_\cC(E,F)$ and $g\in G$.}
\]
We turn this into a right-action by setting $\varphi^g := {}^{g^{-1}}\varphi$.\ The reason is that in \autoref{subsect:skew} below, right-actions of groups on algebras will be considered.\ This should include endomorphism algebras of equivariant objects.\ In any case, the morphisms in $\cC^G$ are given by invariants under the action of $G$:
\begin{equation}\label{eq:HomG}
 \Hom_{\cC^G}\bigl((E,\lambda),(F,\mu) \bigr):=\Hom_\cC(E,F)^G.
\end{equation}

Let now $\cC=\cD$ be a dg-enhanced cocomplete $\fk$-linear triangulated category (cf.\ the assumptions in \autoref{subsect:setupcat}).\ Let us assume furthermore that the dg-enhancement $\widetilde \cD$ can be chosen in a way so that the $G$-action on $\cD$ is induced by an action $\{\hat g\colon \widehat \cD\xrightarrow\sim \widehat \cD\}_{g \in G}$ by dg-autoequivalences $\hat g$, meaning that $[\hat g]=g$ for all $g\in G$.

This is fulfilled if $\cD=D(\cA)$ is the derived category of some Grothendieck category $\cA$ and the $G$-action on $\cD$ is induced by a $G$-action on $\cA$.\ This in turn happens for the example of finite subgroups of automorphisms of schemes discussed above, and also in the example of finite automorphism groups of algebras discussed in \autoref{subsect:skew} below.

Under all these assumptions, the equivariant category $\cD^G$ turns out to be again a dg-enhanced \cite[Cor.\ 8.10]{Elagin--onequi} cocomplete (there is an obvious way to form direct sums in the equivariant category) $\fk$-linear triangulated \cite[Cor.\ 6.10]{Elagin--onequi} category.

Given a subgroup $H\le G$, there is the \emph{restriction} functor 
\[
 \Res_G^H\colon \cD^G\to \cD^H,\quad\quad (E,\lambda)\mapsto (E,\lambda_{\mid H}),
\]
having the \emph{induction} functor $\Ind_H^G\colon \cD^H\to \cD^G$ as a left and right adjoint.\  We have the description 
\begin{equation}\label{eq:Ind}
\Ind_H^G(E)=\bigoplus_{[g]\in G/H} g_*E
\end{equation}
where the sum runs through a set of representatives of the cosets, and
the $G$-linearisation is given by a combination of the $H$-linearisation of $E$ and the isomorphisms $\eps_{g,h}$ between the appropriate direct summands. See \cite[Sect.\ 3.2]{BO--equivariant} for details.\ When the groups $H$ and $G$ are clear from the context\footnote{Often $H=1$ is the trivial subgroup of $G$.}, we often simply write $\Res$ and $\Ind$ instead of $\Res_G^H$ and $\Ind_H^G$.

\begin{lemma}\label{lem:gInd}
Let $E\in \cD$.\ Then $\Ind_1^G(E)\cong \Ind_1^G(g_*E)$ naturally for every $g\in G$.  
\end{lemma}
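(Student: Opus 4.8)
The plan is to build the isomorphism explicitly from the description \eqref{eq:Ind} and then check that it is compatible with the $G$-linearisations. Specialising that formula to $H=1$ (so that $\cD^1\cong\cD$ and the cosets $G/1$ are just the elements of $G$), the underlying object of $\Ind_1^G(E)$ is $\bigoplus_{h\in G} h_*E$, while that of $\Ind_1^G(g_*E)$ is $\bigoplus_{h\in G} h_*(g_*E)$. Applying the structural isomorphisms $\eps_{h,g}\colon h_*\circ g_*\xrightarrow{\sim}(hg)_*$ summand by summand and then re-indexing along the bijection $h\mapsto hg$ of $G$ gives
\[
\bigoplus_{h\in G} h_*(g_*E)\ \xrightarrow{\ \bigoplus_{h}\eps_{h,g}\ }\ \bigoplus_{h\in G}(hg)_*E\ =\ \bigoplus_{h'\in G} h'_*E\,,
\]
an isomorphism between the underlying objects of $\Ind_1^G(g_*E)$ and $\Ind_1^G(E)$, manifestly natural in $E$.

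Next I would verify that this underlying isomorphism is a morphism in $\cD^G$. The key structural fact is that the induced $G$-linearisation on any $\Ind_1^G(F)=\bigoplus_{h}h_*F$ acts on the index set $G$ by \emph{left} multiplication: for $k\in G$, the component of $\lambda_k^{\Ind}$ out of the summand indexed by $h$ is, up to the coherence isomorphisms $\eps$, the identification of $h_*F$ with the summand of $k_*\Ind_1^G(F)$ which becomes $(kh)_*F$ under $\eps_{k,h}$. The re-indexing used above is \emph{right} multiplication by $g$. Since $k(hg)=(kh)g$, left and right multiplication on $G$ commute, and so the isomorphism intertwines the two linearisations; being a summand-wise isomorphism it is then an isomorphism in $\cD^G$.

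The only genuine work is the bookkeeping needed to confirm that this commutation of left and right multiplication persists at the level of the $\eps$-twists — that is, that the relevant triangles built from the $\eps_{g,h}$ commute. This is precisely the content of the associativity (2-cocycle) condition satisfied by the family $(\eps_{g,h})$ defining the $G$-action, so the verification is routine but notationally heavy; I expect this index-and-coherence check to be the main obstacle.

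Alternatively, one can bypass the bookkeeping via Yoneda, using that $\Ind_1^G$ is left adjoint to the forgetful functor $\Res\colon\cD^G\to\cD$. For any $(F,\mu)\in\cD^G$ one has natural identifications $\Hom_{\cD^G}(\Ind_1^G(E),(F,\mu))\cong\Hom_\cD(E,F)$ and $\Hom_{\cD^G}(\Ind_1^G(g_*E),(F,\mu))\cong\Hom_\cD(g_*E,F)$. Conjugating by the autoequivalence $g_*$ together with the linearisation isomorphism $\mu_{g^{-1}}\colon F\xrightarrow{\sim}(g^{-1})_*F$ produces an isomorphism $\Hom_\cD(g_*E,F)\cong\Hom_\cD(E,F)$, and the Yoneda lemma then yields the asserted natural isomorphism $\Ind_1^G(g_*E)\cong\Ind_1^G(E)$. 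The one point requiring care here is that this identification is natural with respect to morphisms of $\cD^G$; this holds precisely because $G$-invariant morphisms commute with the linearisations.
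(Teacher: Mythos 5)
Your proposal is correct, and its closing ``alternative'' is in fact the paper's own proof: the paper exhibits a natural isomorphism $\Res\cong (g^{-1})_*\circ\Res$ of functors $\cD^G\to\cD$, given on an object $(F,\mu)$ by the linearisation component $\mu_{g^{-1}}\colon F\to (g^{-1})_*F$, and then simply takes adjoints to conclude $\Ind\cong\Ind\circ g_*$; your Yoneda formulation via $\Hom_{\cD^G}(\Ind_1^G(g_*E),(F,\mu))\cong\Hom_\cD(g_*E,F)\cong\Hom_\cD(E,F)$ is the same argument unwound at the level of Hom-sets. Your primary argument — building the isomorphism summand by summand from the description \eqref{eq:Ind} via the $\eps_{h,g}$ and the re-indexing $h\mapsto hg$, then checking equivariance using that left and right multiplication on $G$ commute — is a genuinely different, more explicit route. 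It is viable, and you correctly identify that the only real content is the associativity (cocycle) condition on the family $(\eps_{g,h})$; but you stop short of carrying out that coherence check, which is exactly the bookkeeping the adjunction argument is designed to avoid. What the explicit route buys is a concrete formula for the isomorphism on underlying objects; what the paper's (and your alternative) route buys is a two-line proof with the coherence absorbed into the adjunction, at the cost of leaving the isomorphism less explicit.
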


\begin{proof}
There is a natural isomorphism $\Res\cong (g^{-1})_*\circ \Res$, given on objects $\cF=(F,\lambda) \in \cD^G$ by the linearisation 
\[
 \Res(\cF)=F\xrightarrow{\lambda_{g^{-1}}} (g^{-1})_*F=(g^{-1})_*\Res(\cF)\,.
\]
Taking adjoints, we get an isomorphism of functors $\Ind\cong \Ind\circ g_{\ast}$. 
\end{proof}

\begin{remark}\label{rem:tensorG}
Since $\cD$ is $\fk$-linear, we have a well-defined tensor product $V\otimes_{\fk} E$ for every $E\in \cD$ and every finite-dimensional vector space $V$, namely $V\otimes_{\fk} E\cong E^{\oplus \dim V}$. % (as $\cD$ is also cocomplete, this remains true for inifite-dimensional vector spaces, but we will not need this in the following). \

More canonically, the tensor product is the object satisfying the two universal properties $\Hom_{\cD}(V\otimes_{\fk} E,F)\cong \Hom_{\fk}\bigr(V,\Hom_{\cD}(E,F)\bigl)$ and $\Hom_{\cD}(F,V\otimes_{\fk} E)\cong V\otimes_\fk \Hom_{\cD}(E,F)$.\
Thus, for objects $E,F\in \cD$ and finite-dimensional vector spaces $V,W\in \mathsf{vec}_\fk$, we have
\begin{equation}\label{eq:tensoruniv}
\Hom_{\cD}(V\otimes_{\fk} E, W\otimes_{\fk} F)\cong \Hom_{\fk}(V,W)\otimes_\fk \Hom_{\cD}(E,F)\,.
\end{equation}
Let now $V=(V,\rho)\in \rep(G)$ be a finite-dimensional right-G-representation, given by a  group homomorphism $\rho\colon G\to \Aut_{\fk}(V)^{\mathsf{op}}$.\ Then for each $\cE=(E,\lambda)\in \cD^G$, there is a well-defined equivariant object $\rho \otimes_\fk \cE := V\otimes_\fk \cE:=(V,\rho)\otimes_\fk \cE$.\ Its underlying non-equivariant object is $V\otimes_\fk E$ and the linearisation is given by 
\[
\rho_g\otimes \lambda_g \in \End_{\fk}(V)\otimes_{\fk} \Hom(E,g_*E)\cong \Hom(V \otimes_{\fk} E,V \otimes_{\fk} (g_*E))\cong \Hom(V \otimes_{\fk} E,g_*(V \otimes_{\fk} E))\,.     
\]
This means that we have an action of the monoidal category
 $\rep(G)$ on the category $\cD^G$.
\end{remark}

\subsection{Skew groups algebras}\label{subsect:skew}

Let now $A$ be a $\fk$-algebra, and let $G$ be a finite group acting on $A$ from the right via $\fk$-algebra automorphisms.\ For $g\in G$ and $a\in A$, we write $a^g:=a\cdot g$ so that $(ab)^g=a^gb^g$ and $(a^g)^h=a^{gh}$.\ The group $G$ then acts on the category $\Mod(A)$ of right $A$-modules as follows.\ For $M\in \Mod(A)$ and $g\in G$, the module $g_*M$ has the same underlying abelian group as $M$ but the new scalar multiplication $\star$ differs by $g$ from the original one:\ $m\star a=m\cdot a^g$. Note that this is a \emph{strict} categorical left-action, meaning that the isomorphisms $\eps_{g,h}\colon g_*\circ h_*\xrightarrow \sim (gh)_*$ are the identities.

Let us now define the \emph{skew group} algebra $G\ltimes A$; cf.\ \cite{RR--skew}.\ As a $\fk$-vector space, it is defined by having a basis consisting of pairs $(g,a)$ with $g\in G$, and $a\in A$.\ In other words, $G\ltimes A$ coincides with $\fk\langle G\rangle \otimes_{\fk} A$ as a vector space, where $\fk\langle G\rangle$ is the regular representation (or rather its underlying vector space).\ The multiplication is given by linear extension of
\[
 (g,a)\cdot (h,b)=(gh, a^h\cdot b)\,.
\]

\begin{lemma}\label{lem:skewequivalence}
There is a natural equivalence of categories $\Mod(G\ltimes A)\cong \Mod(A)^G$.
\end{lemma}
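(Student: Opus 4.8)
The plan is to exhibit an explicit pair of mutually inverse functors; in fact the equivalence will be an isomorphism of categories which is the identity on underlying $\fk$-vector spaces. First I would unwind the two sides. A right $G\ltimes A$-module $N$ carries, by restriction along the $\fk$-algebra embedding $A\hookrightarrow G\ltimes A$, $a\mapsto (1,a)$, an underlying right $A$-module structure; simultaneously, right multiplication by the group elements provides $\fk$-linear maps $\phi_g\colon N\to N$, $\phi_g(n):=n\cdot (g,1)$.

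Next I would record the two bookkeeping identities forced by the multiplication rule $(g,a)(h,b)=(gh,a^hb)$. From $(g,1)(h,1)=(gh,1)$ together with associativity of the module action one gets $\phi_h\circ \phi_g=\phi_{gh}$ (and $\phi_1=\id$), while from $(1,a)(g,1)=(g,a^g)=(g,1)(1,a^g)$ one gets the twisted linearity $\phi_g(n\cdot a)=\phi_g(n)\cdot a^g$. The latter says exactly that each $\phi_g$ is a morphism of right $A$-modules $N\to g_*N$, where $g_*N$ is the twisted module with action $m\star a=m\cdot a^g$; the former is the cocycle condition for the \emph{strict} categorical action, since $\eps_{g,h}=\id$ and $g_*$ is the identity on underlying maps, so the linearisation axiom reads $\lambda_h\circ\lambda_g=\lambda_{gh}$ on underlying maps. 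Hence $N\mapsto (N,(\phi_g)_g)$ defines an object of $\Mod(A)^G$.

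For the inverse, given $(M,\lambda)\in\Mod(A)^G$ I would set $N:=M$ with $n\cdot(g,a):=\lambda_g(n)\cdot a$. A direct check using $\lambda_h(\lambda_g(n)\cdot a)=\lambda_{gh}(n)\cdot a^h$ shows this respects the product of $G\ltimes A$, and $\lambda_1=\id$ (forced by the cocycle condition with $g=h=1$, since $\lambda_1$ is an idempotent isomorphism) makes the action unital. On morphisms, a $\fk$-linear map $f$ is $G\ltimes A$-linear precisely when it is both $A$-linear and commutes with every $\phi_g$; and the $G$-invariance condition of \eqref{eq:HomG}, using $g_*f=f$ on underlying maps, unwinds to exactly $\mu_g\circ f=f\circ \lambda_g$. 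Thus the two notions of morphism agree, and the two assignments are mutually inverse functors.

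The entire content is the translation of conventions, so the only real obstacle is keeping the variances consistent: that $A$ acts on the right, that $G$ acts on the right of $A$ (which is why $a^h$ appears in the product) and on the left of the category, and that the categorical action is strict. Once the identities $\phi_h\circ\phi_g=\phi_{gh}$ and $\phi_g(n\cdot a)=\phi_g(n)\cdot a^g$ are matched against the linearisation axioms, naturality of the equivalence in both $N$ and $(M,\lambda)$ is immediate, as both functors are the identity on underlying spaces and maps.
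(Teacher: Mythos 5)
Your proposal is correct and matches the paper's proof: the paper uses exactly the same mutually inverse assignments, namely $m\cdot(g,a):=\lambda_g(m)a$ in one direction and restriction along $a\mapsto(e,a)$ together with $\lambda_g(m):=m\cdot(g,1)$ in the other. You simply spell out the verifications (the identities $\phi_h\circ\phi_g=\phi_{gh}$ and $\phi_g(n\cdot a)=\phi_g(n)\cdot a^g$, and the matching of morphism conditions) that the paper leaves to the cited references.
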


\begin{proof}
This is well-known; see e.g.\ \cite[Ex.\ 2.6]{Chen--equi} and the unpublished notes \cite{Poon}.\ Since the conventions in the literature differ from our ones in whether to consider left or right modules or actions, let us quickly recall the construction of the mutually inverse equivalences.\ This will be needed in \autoref{rem:skewkG} below in any case.\par 

Given an equivariant $A$-Module $(M,\lambda)$, the associated $(G\ltimes A)$-module has the same underlying abelian group, and is equipped with the scalar multiplication $m\cdot (g,a):=\lambda_{g}(m)a$.

Conversely, given a $(G\ltimes A)$-module, we can restrict it to an $A$-module via the embedding of algebras 
\[
 A \hookrightarrow G\ltimes A,\quad\quad a\mapsto (e,a)
\]
and equip this restricted $A$-module with the linearisation given by $\lambda_g(m):=m\cdot (g,1)$.
\end{proof}

\begin{remark}\label{rem:skewkG}
The right $A$-module $A$ has a natural $G$-linearisation given by its right-action $\lambda_g=g$.\ Under the equivalence of \autoref{lem:skewequivalence}, the right $(G\ltimes A)$-module $G\ltimes A$ corresponds to the equivariant $A$-module $\fk\langle G\rangle \otimes_\fk A$.\ See \autoref{rem:tensorG} for the definition of the tensor product with the regular (right-)representation $\fk\langle G\rangle$ of the finite group $G$.
\end{remark}

This allows us to describe the skew group algebra as the invariants of a tensor product of algebras with $G$-actions.

\begin{lemma}\label{lem:skewashominva}
We have an isomorphism of $\fk$-algebras
\[
G\ltimes A\cong \Bigl( \Hom_\fk\bigl(\fk\langle G\rangle, \fk\langle G\rangle\bigr) \otimes_\fk A \Bigr)^G\,.
\]
\end{lemma}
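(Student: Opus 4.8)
The plan is to prove the isomorphism
\[
G\ltimes A\cong \Bigl( \Hom_\fk\bigl(\fk\langle G\rangle, \fk\langle G\rangle\bigr) \otimes_\fk A \Bigr)^G
\]
by unwinding both sides and exhibiting an explicit $\fk$-linear map that respects multiplication. First I would pin down the $G$-action on the right-hand side. The algebra $B:=\Hom_\fk(\fk\langle G\rangle,\fk\langle G\rangle)\otimes_\fk A$ carries a right $G$-action combining two pieces: the conjugation action of $G$ on $\Hom_\fk(\fk\langle G\rangle,\fk\langle G\rangle)$ coming from the (right-)regular representation $\fk\langle G\rangle$ as in \autoref{rem:tensorG}, and the given right $G$-action on $A$ by algebra automorphisms. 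Concretely, if we let $r_h$ denote the operator on $\fk\langle G\rangle$ given by right multiplication by $h$, then the conjugation action sends $\psi\mapsto r_h^{-1}\circ\psi\circ r_h$, and the action on a decomposable tensor is $(\psi\otimes a)^h=(r_h^{-1}\psi r_h)\otimes a^h$. The invariants $B^G$ are then the elements fixed by this diagonal action, and they inherit the algebra structure of $B$ (which is the tensor product of the composition/matrix product on $\Hom_\fk(\fk\langle G\rangle,\fk\langle G\rangle)$ with the multiplication on $A$).

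Next I would construct the map $\Theta\colon G\ltimes A\to B^G$. The natural candidate sends a basis element $(g,a)$ to $r_g\otimes \iota_g(a)$, or more precisely to the $G$-average that lands in the invariants. Using the standard basis $(e_x)_{x\in G}$ of $\fk\langle G\rangle$, an element of $\Hom_\fk(\fk\langle G\rangle,\fk\langle G\rangle)$ is an $|G|\times|G|$ matrix indexed by $G\times G$; so $B$ is identified with $G\times G$ matrices with entries in $A$. Under this identification I would send $(g,a)\in G\ltimes A$ to the matrix whose $(x,y)$-entry is $a^{y}$ if $x=gy$ and $0$ otherwise — i.e.\ the matrix supported on the coset-permutation pattern of $g$, with the $A$-entry twisted by the relevant group element so as to be $G$-invariant. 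One then checks directly that such matrices are exactly the $G$-invariant ones: the invariance condition forces all entries along a single ``$g$-diagonal'' to be determined by one entry via the twisting $a\mapsto a^{y}$, so $B^G$ is spanned precisely by these elements and has dimension $|G|\cdot\dim_\fk A$, matching $\dim_\fk(G\ltimes A)$.

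The main verification — and the step I expect to require the most care — is that $\Theta$ is multiplicative, i.e.\ that the twisted-matrix product on $B^G$ reproduces the skew multiplication $(g,a)\cdot(h,b)=(gh,a^h b)$. This is a bookkeeping computation: multiplying the two permutation-type matrices composes the coset shifts to give $gh$, while the entry contributions multiply after the $A$-action on the intermediate index is applied, producing exactly the twist $a^h$ before multiplying by $b$. The potential pitfalls here are purely notational: getting the left-versus-right conventions consistent (the paper uses right $G$-actions on $A$ and on modules, and the left categorical action with $\eps_{g,h}=\id$ in the strict case of \autoref{subsect:skew}), and making sure the conjugation action on $\Hom_\fk(\fk\langle G\rangle,\fk\langle G\rangle)$ is taken with the correct variance so that $r_g\mapsto r_g$ under invariance rather than $r_{g^{-1}}$. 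Once multiplicativity is checked on basis elements and $\Theta$ is seen to be bijective by the dimension count above, it follows that $\Theta$ is the desired $\fk$-algebra isomorphism.

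Rather than doing this entirely by hand, I would streamline the argument by invoking \autoref{rem:skewkG}: the right $(G\ltimes A)$-module $G\ltimes A$ corresponds under \autoref{lem:skewequivalence} to the equivariant module $\fk\langle G\rangle\otimes_\fk A$. Taking endomorphisms and using \eqref{eq:HomG} together with the universal property \eqref{eq:tensoruniv} gives
\[
G\ltimes A\cong \End_{G\ltimes A}(G\ltimes A)\cong \End_{\Mod(A)^G}\bigl(\fk\langle G\rangle\otimes_\fk A\bigr)\cong \End_{\Mod(A)}\bigl(\fk\langle G\rangle\otimes_\fk A\bigr)^G,
\]
and the non-equivariant endomorphism algebra is identified via \eqref{eq:tensoruniv} with $\Hom_\fk(\fk\langle G\rangle,\fk\langle G\rangle)\otimes_\fk A$, the $G$-action being precisely the diagonal one described above. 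This route reduces the whole statement to \autoref{lem:skewequivalence} and \autoref{rem:tensorG} and sidesteps the explicit matrix multiplication, leaving only the (routine but convention-sensitive) verification that the two $G$-actions match.
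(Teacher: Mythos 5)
Your concluding ``streamlined'' argument is exactly the paper's proof: identify $G\ltimes A$ with $\End_{G\ltimes A}(G\ltimes A)$, pass through \autoref{lem:skewequivalence} and \autoref{rem:skewkG} to the equivariant module $\fk\langle G\rangle\otimes_\fk A$, and apply \eqref{eq:HomG} and \eqref{eq:tensoruniv}. The preceding explicit matrix construction is a correct but unnecessary detour (and its multiplicativity check is only sketched), so you were right to fall back on the categorical route.
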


\begin{proof}
Indeed,
\begin{align*}
G\ltimes A &\cong \Hom_{G\ltimes A}(G\ltimes A, G\ltimes A)\\
& \cong \Hom_A\bigl(\fk\langle G\rangle \otimes_\fk A, \fk\langle G\rangle \otimes_\fk A\bigr)^G\tag{by \autoref{rem:skewkG} and \eqref{eq:HomG}}\\
& \cong \Bigl(  \Hom_\fk\bigl(\fk\langle G\rangle, \fk\langle G\rangle\bigr) \otimes_\fk \Hom_A(A,A) \Bigr)^G\tag{by \eqref{eq:tensoruniv}}\\
& \cong \Bigl(  \Hom_\fk\bigl(\fk\langle G\rangle, \fk\langle G\rangle\bigr) \otimes_\fk A \Bigr)^G\,.\qedhere
\end{align*}
\end{proof}

\section{Proof of the Main Theorem}

We continue with assuming that $\cD$ is a dg-enhanced cocomplete $\fk$-linear triangulated category on which a finite group $G$ with $\Char \fk$ and $|G|$ coprime acts, and that the action lifts to a dg-enhancement $\widehat\cD$ of $\cD$. 

\subsection{Induced generators and tilting objects in the equivariant category}

\begin{lemma}\label{lem:compact}
 If $G$ is a compact generator of $\cD$, then $\Ind(G)$ is a compact generator of $\cD^G$.
\end{lemma}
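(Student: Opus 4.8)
The plan is to reduce everything to the known properties of the original generator $G$ in $\cD$ via the induction--restriction adjunction. Write $\Res=\Res_G^1\colon\cD^G\to\cD$ for restriction to the trivial subgroup, i.e.\ the forgetful functor $(F,\lambda)\mapsto F$, so that $\Ind=\Ind_1^G$ is simultaneously its left and right adjoint. The computational backbone is the shift-compatible adjunction isomorphism
\[
\Hom^*_{\cD^G}\bigl(\Ind(G),\cF\bigr)\cong\Hom^*_{\cD}\bigl(G,\Res\cF\bigr),\qquad \cF\in\cD^G,
\]
which holds because $\Ind$ is triangulated (hence commutes with shifts) and is left adjoint to $\Res$.

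First I would verify that $\Ind(G)$ is compact. Since direct sums in $\cD^G$ are formed on the underlying objects, the functor $\Res$ preserves arbitrary direct sums; as $G$ is compact in $\cD$, the composite $\Hom_{\cD}(G,\Res(\_))$ therefore commutes with direct sums. By the displayed isomorphism the same holds for $\Hom_{\cD^G}(\Ind(G),\_)$, which is precisely the statement that $\Ind(G)\in(\cD^G)^c$.

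Next I would prove generation, i.e.\ $\langle\Ind(G)\rangle^\perp=0$. Take $\cF\in\cD^G$ with $\Hom^*_{\cD^G}(\Ind(G),\cF)=0$. The displayed isomorphism then yields $\Hom^*_{\cD}(G,\Res\cF)=0$, and since $G$ is a compact generator of $\cD$ this forces $\Res\cF\cong 0$; that is, the underlying object of $\cF$ vanishes. Because $\Res$ is conservative (an equivariant object with zero underlying object is the zero object of $\cD^G$), we conclude $\cF\cong 0$, as desired.

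I do not anticipate a genuine obstacle: the whole argument is a formal consequence of the adjunction together with the compactness and generation of $G$. The only points meriting a line of justification are that $\Res$ preserves direct sums and is conservative, both of which are immediate from the explicit descriptions of direct sums and of morphisms in $\cD^G$ recalled in \autoref{subsect:equicat}.
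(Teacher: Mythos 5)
Your proposal is correct and follows essentially the same route as the paper: compactness of $\Ind(G)$ via the fact that its right adjoint $\Res$ preserves arbitrary direct sums (the paper cites a general lemma for this step, you unwind it directly), and generation via the adjunction isomorphism together with conservativity of $\Res$. No gaps.
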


\begin{proof}
 Since its right adjoint $\Res$ commutes with arbitrary direct sums, compact objects are preserved by the functor $\Ind\colon \cD\to \cD^G$; see  \cite[Lem.\ 2.10]{KuzLunts}.\ In particular, $\Ind(G)$ is compact.
 
Let $\cF\in \cD^G$ with $\Hom^*_{\cD}(\Ind G,\cF)=0$. Then, by adjunction, $\Hom^*_\cD(G,\Res\cF)=0$, which implies $\Res\cF\cong 0$ since $G$ is a generator. As $\Res$ is just given by forgetting the linearisation, this already implies that $\cF\cong 0$.  
\end{proof}

Next, we compute the endomorphism algebras of objects in the image of the functor $\Ind\circ \Res\colon \cD^G\to \cD^G$.\ To this end, let $\cT\in \cD^G$ and $T=\Res(\cT)\in \cD$.\ Notice that on the graded algebra $\End^*_{\cD}(T)$, there is a $G$-action given by conjugation with the $G$-linearisation of $\cT$. Hence, we can form the skew group algebra $G\ltimes \End^*_{\cD}(T)$.   

\begin{lemma}\label{cor:IndEnd}
Let $T\in \cD$ be \emph{$G$-linearisable}, which means that $T=\Res(\cT)$ for some object $\cT\in \cD^G$ in the equivariant category.\ Then 
\[
\End^*_{\cD^G}(\Ind(T))\cong G\ltimes \End^*_{\cD}(T)\,.\]
\end{lemma}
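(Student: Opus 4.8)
The plan is to mirror, on the categorical side, the purely algebraic computation carried out in \autoref{lem:skewashominva}. Write $A := \End^*_\cD(T)$, equipped with the right $G$-action by conjugation with the linearisation $\lambda$ of $\cT$, so that forming the skew group algebra $G\ltimes A$ makes sense. The target isomorphism will be produced by first identifying $\Ind(T)$ with a tensor object and then invoking \autoref{lem:skewashominva}.

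First I would establish the \emph{projection-formula} isomorphism $\Ind(\Res\,\cT)\cong \fk\langle G\rangle\otimes_\fk \cT$ in $\cD^G$, where $\fk\langle G\rangle$ carries the right regular representation and the tensor product is the one of \autoref{rem:tensorG}. Indeed, by \eqref{eq:Ind} the underlying object of $\Ind(\Res\,\cT)$ is $\bigoplus_{g\in G} g_* T$, which, using the linearisation isomorphisms $\lambda_g\colon T\isomor g_* T$, is identified with $\bigoplus_{g\in G} T = \fk\langle G\rangle\otimes_\fk T$; one then checks that the $G$-linearisation of \eqref{eq:Ind} matches the linearisation $\rho_g\otimes\lambda_g$ of $\fk\langle G\rangle\otimes_\fk\cT$ from \autoref{rem:tensorG}. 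This is the equivariant avatar of \autoref{lem:gInd} and of \autoref{rem:skewkG}.

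With this in hand I would compute, using \eqref{eq:HomG} degreewise,
\[
\End^*_{\cD^G}(\Ind T)\cong \Hom^*_{\cD^G}(\fk\langle G\rangle\otimes_\fk\cT,\ \fk\langle G\rangle\otimes_\fk\cT)\cong \Hom^*_\cD(\fk\langle G\rangle\otimes_\fk T,\ \fk\langle G\rangle\otimes_\fk T)^G,
\]
where $G$ acts by conjugation with the linearisation $\rho\otimes\lambda$. Since $\fk\langle G\rangle$ is finite-dimensional, \eqref{eq:tensoruniv} yields a graded-algebra isomorphism $\Hom^*_\cD(\fk\langle G\rangle\otimes_\fk T,\fk\langle G\rangle\otimes_\fk T)\cong \Hom_\fk(\fk\langle G\rangle,\fk\langle G\rangle)\otimes_\fk A$, and I must verify that under this identification the conjugation action $\rho\otimes\lambda$ splits as (conjugation by $\rho$)$\otimes$(conjugation by $\lambda$). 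The second tensor factor is exactly the $G$-action on $A$ defining $G\ltimes A$, while the first is precisely the action appearing in \autoref{lem:skewashominva}. Taking $G$-invariants and applying that lemma (with the present $A$) then gives
\[
\End^*_{\cD^G}(\Ind T)\cong \bigl(\Hom_\fk(\fk\langle G\rangle,\fk\langle G\rangle)\otimes_\fk A\bigr)^G\cong G\ltimes A = G\ltimes \End^*_\cD(T).
\]

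As a sanity check on the underlying graded vector space, the adjunction $\Ind\dashv\Res$ together with \eqref{eq:Ind} gives $\Hom^*_{\cD^G}(\Ind T,\Ind T)\cong\Hom^*_\cD(T,\Res\,\Ind T)\cong\bigoplus_{g\in G}\Hom^*_\cD(T,g_*T)\cong\fk\langle G\rangle\otimes_\fk A$, which recovers the expected dimension. I expect the main obstacle to be bookkeeping rather than conceptual: confirming that the first-step isomorphism is genuinely $G$-equivariant and an isomorphism of \emph{algebras}, and that the conjugation action splits compatibly across \eqref{eq:tensoruniv} so that the skew multiplication $(g,a)\cdot(h,b)=(gh,a^h b)$ is reproduced exactly. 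Once the linearisations are matched on the nose, the algebra structure transports automatically, since composition corresponds to the tensor of compositions under \eqref{eq:tensoruniv}.
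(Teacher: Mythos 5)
Your proposal is correct and follows essentially the same route as the paper's proof: identify $\Ind(\Res\cT)\cong\fk\langle G\rangle\otimes_\fk\cT$ (the paper simply cites Elagin for this step rather than verifying the linearisations by hand), then split the endomorphisms via \eqref{eq:tensoruniv} and conclude with \autoref{lem:skewashominva}. The bookkeeping you flag is exactly what the cited references and \autoref{lem:skewashominva} absorb, so no further work is needed.
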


\begin{proof}
We have $\Ind(T)\cong \Ind(\Res(\cT))\cong \fk\langle G\rangle \otimes_{\fk} \cT$; see \cite[Prop.\ 4.1]{Elagin--onequi}.\ Hence
\begin{align*}
 \End^*_{\cD^G}(\Ind(T))&\cong \Hom^*_{\cD^G}\bigl(\fk\langle G\rangle \otimes_{\fk} \cT,\fk\langle G\rangle \otimes_{\fk} \cT \bigr)\\
&\cong \Bigl(  \Hom_\fk\bigl(\fk\langle G\rangle, \fk\langle G\rangle\bigr) \otimes_\fk  \End^*_\cD(T)\Bigr)^G\\ 
& \cong G\ltimes \End_{\cD}^*(T), 
\end{align*}
where the last isomorphism follows from \autoref{lem:skewashominva} with $A=\End^*(T)$.  
\end{proof}

\begin{cor}\label{cor:equitilt}
If $T=\Res\cT$ is a tilting object of $\cD$, then $\Ind(T)$ is a tilting object of $\cD^G$ fulfilling 
\[
\End_{\cD^G}(\Ind(T))\cong G\ltimes \End_{\cD}(T)\,.\]
\end{cor}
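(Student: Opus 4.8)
The plan is to assemble the statement directly from the two preceding lemmas, the only additional input being an elementary observation about the grading on the skew group algebra. First I would verify that $\Ind(T)$ is a compact generator of $\cD^G$. Since a tilting object is by definition a compact generator, this is immediate from \autoref{lem:compact}, applied to the compact generator $T$.

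Next I would compute the graded endomorphism algebra. As $T = \Res\cT$ is $G$-linearisable, \autoref{cor:IndEnd} applies and yields
\[
\End^*_{\cD^G}(\Ind(T)) \cong G \ltimes \End^*_{\cD}(T).
\]
The key remaining point is the placement of this algebra in a single cohomological degree. Because $T$ is a tilting object, $\End^*_{\cD}(T)$ is concentrated in degree zero, so it coincides with the ordinary endomorphism algebra $\End_{\cD}(T)$. The skew group algebra construction recalled in \autoref{subsect:skew} has underlying graded vector space $\fk\langle G\rangle \otimes_\fk \End^*_{\cD}(T)$, with $\fk\langle G\rangle$ sitting in degree zero; hence $G \ltimes \End^*_{\cD}(T)$ inherits the grading of $\End^*_{\cD}(T)$ and is likewise concentrated in degree zero. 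Combining this with the displayed isomorphism shows that $\End^*_{\cD^G}(\Ind(T))$ is concentrated in degree zero and equals $G \ltimes \End_{\cD}(T)$ there.

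Together with the first step, this proves that $\Ind(T)$ is a compact generator whose graded endomorphism algebra lives in degree zero, i.e.\ a tilting object of $\cD^G$, and simultaneously identifies $\End_{\cD^G}(\Ind(T))$ with $G \ltimes \End_{\cD}(T)$. I do not expect any genuine obstacle here: the content is entirely carried by \autoref{lem:compact} and \autoref{cor:IndEnd}, and the lone verification needed is that forming the skew group algebra does not shift cohomological degrees, which is transparent from the definition of the multiplication $(g,a)\cdot(h,b) = (gh, a^h \cdot b)$ since the $G$-action preserves the grading of $\End^*_{\cD}(T)$.
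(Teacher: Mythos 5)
Your proposal is correct and matches the paper's (implicit) argument exactly: the corollary is stated in the paper without proof precisely because it follows immediately from \autoref{lem:compact} and \autoref{cor:IndEnd}, together with the observation that the skew group algebra construction preserves the grading, so concentration of $\End^*_{\cD}(T)$ in degree zero carries over to $G\ltimes\End^*_{\cD}(T)$. Nothing is missing.
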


\subsection{The set-up and the induced equivariant exceptional sequence}\label{subsect:setup}

From now on, we assume that we have a full exceptional collection 
\begin{align}\label{eq:originalEC}
 (E_{1,1},\dots, E_{1,\ell_1},E_{2,1},\dots,E_{2,\ell_2},\dots, E_{k,1},\dots,E_{k,\ell_k})
\end{align}
on $\cD$ such that $G$ acts transitively on every block $(E_{i,1},\dots, E_{i,\ell_i})$ (compare this to (\ref{eq:originalECintro})).\ By this we mean that there exists a transitive $G$-action on the index set $\{1,\dots,\ell_i\}$ such that $g_*E_{i,\ell_i}\cong E_{i,g(\ell_i)}$.\ Let $H_i=\Stab_{G}(1)$ be the stabiliser of the first member $E_i:=E_{i,1}$ of the $i$-th block.\ Notice that the assumptions imply that each block $(E_{i,1},\dots, E_{i,\ell_i})$ is not only semi-orthogonal, but completely orthogonal. In particular, 
\begin{equation}\label{eq:fullortho}
\Hom^*_{\cD}(E_i,g_*E_i)=0\qquad\text{for all $i=1,\dots, k$ and $g\notin H_i$.} 
\end{equation}

We make the further assumption that $E_{i,1}$ is not only $H_i$-invariant, but also $H_i$-linearisable. This means that there exists an $\cE_i=(E_i,\lambda)\in \cD^{H_i}$ with
\[\Res_{H_i}^1(\cE_i)=E_i=E_{i,1}\,.\]
This implies that the direct sum of the members of every block is given by 
\begin{equation}\label{eq:blocklin}
 E_{i,1}\oplus\dots\oplus E_{i,\ell_1}\cong \Res_G^1\Ind_{H_i}^G \cE_i\,.
\end{equation}

Now, for $i=1,\dots, k$ and $\rho\in \irr(H_i)$ denoting an irreducible representation of the finite group $H_i \leq G$, we consider the $G$-equivariant object
\[
 F_{i,\rho}:=\Ind_{H_i}^G (\rho \otimes_{\fk}\cE_i)\,.
\]
\begin{lemma}\label{lem:inducedEC}
In $\cD^G$, the $F_{i,\rho}$ form a w-exceptional collection
\begin{equation}\label{eq:inducedEC}
\Bigl((F_{1,\rho})_{\rho\in \irr(H_1)}, (F_{2,\rho})_{\rho\in \irr(H_2)}, \dots, (F_{k,\rho})_{\rho\in \irr(H_k)}\Bigr)\,. 
\end{equation}
The members of each block $(F_{i,\rho})_{\rho\in \irr(H_i)}$ are completely orthogonal, hence their order does not matter.
If $\fk=\bar \fk$ is algebraically closed, then this is an exceptional collection.
% If the original exceptional collection \eqref{eq:originalEC} is strong, the same holds for \eqref{eq:inducedEC}.
\end{lemma}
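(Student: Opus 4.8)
The plan is to verify the three claims---w-exceptionality, complete orthogonality within blocks, and exceptionality over $\bar\fk$---by reducing all $\Hom^*$-computations in $\cD^G$ to computations in $\cD$ via the induction-restriction adjunction, and then extracting the relevant invariants. The central tool is the adjunction isomorphism
\[
\Hom^*_{\cD^G}\bigl(\Ind_{H_i}^G(\rho\otimes\cE_i),\Ind_{H_j}^G(\sigma\otimes\cE_j)\bigr)\cong \Hom^*_{\cD^{H_j}}\bigl(\Res_G^{H_j}\Ind_{H_i}^G(\rho\otimes\cE_i),\sigma\otimes\cE_j\bigr),
\]
using that $\Ind$ is right adjoint to $\Res$. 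First I would rewrite the restriction of the induced object using the coset formula \eqref{eq:Ind}, namely $\Res_G^1\Ind_{H_i}^G(\rho\otimes\cE_i)\cong\bigoplus_{[g]\in G/H_i}g_*(\rho\otimes E_i)$, so that the underlying non-equivariant Hom-space becomes a direct sum of the spaces $\Hom^*_{\cD}(g_*E_i,g'_*E_j)$ tensored with the appropriate representation-theoretic multiplicity spaces coming from $\rho$ and $\sigma$.

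The key input is the complete orthogonality \eqref{eq:fullortho}, which forces $\Hom^*_\cD(E_i,g_*E_i)=0$ for $g\notin H_i$, together with the semi-orthogonality of the original collection \eqref{eq:originalEC} across distinct blocks. For $i\neq j$, the ordering of the blocks and the fact that every $E_{j,\cdot}$ is a $g_*$-translate of $E_j$ show that all the contributing $\Hom^*_\cD$-terms vanish in one direction, yielding the semi-orthogonality $\Hom^*_{\cD^G}(F_{i,\rho},F_{j,\sigma})=0$ for $i>j$. For the diagonal $i=j$, only the terms with $g\in H_i$ survive by \eqref{eq:fullortho}, and these reduce to $\End^*_{\cD}(E_i)$, which is concentrated in degree zero and equals $\fk\cdot\id$ because $E_i$ is exceptional in $\cD$. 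The upshot is that computing $\End^*_{\cD^G}(F_{i,\rho})$ and the cross-terms $\Hom^*_{\cD^G}(F_{i,\rho},F_{i,\sigma})$ amounts to taking $H_i$-invariants of $\Hom_\fk(\rho,\sigma)\otimes_\fk \End^*_\cD(E_i)$, i.e.\ of $\Hom_{H_i}(\rho,\sigma)$ in degree zero. By Schur's lemma this is a division algebra when $\rho\cong\sigma$ and vanishes when $\rho\not\cong\sigma$, giving simultaneously the w-exceptionality (each $\End(F_{i,\rho})\cong\End_{H_i}(\rho)$ is a finite-dimensional division algebra) and the complete orthogonality within a block. Over $\bar\fk$, Schur's lemma gives $\End_{H_i}(\rho)=\bar\fk$, so each $F_{i,\rho}$ is genuinely exceptional.

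I expect the main obstacle to be the careful bookkeeping in the diagonal case $i=j$: one must track how the $G$-linearisation interacts with the coset decomposition so that the $G$-invariants of the full induced Hom-space collapse exactly onto the $H_i$-invariants of $\Hom_\fk(\rho,\sigma)\otimes_\fk\End^*_\cD(E_i)$, rather than onto some larger space. Concretely, the adjunction already absorbs one induction, reducing the outer $G$-invariants to $H_i$-invariants, and then \eqref{eq:fullortho} kills every coset representative outside $H_i$; the surviving $H_i$-action is precisely conjugation by the linearisation $\lambda$ of $\cE_i$ twisted by $\rho$ and $\sigma$. Making this identification precise---and checking that the degree-zero concentration of $\End^*_\cD(E_i)$ is preserved under passing to invariants, which holds since taking invariants is exact in the coprime-characteristic setting---is the delicate step, but it is a direct computation in the spirit of \cite{Ela}, and once it is in place all three assertions follow immediately.
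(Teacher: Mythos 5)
Your proposal is correct and follows essentially the same route as the paper: reduce all $\Hom^*$-computations via the $\Ind$--$\Res$ adjunction and the coset formula \eqref{eq:Ind}, kill the off-diagonal terms with \eqref{eq:fullortho}, identify the surviving invariants with $\Hom_{H_i}(\rho,\sigma)$ via \eqref{eq:tensoruniv}, and conclude with Schur's lemma; inter-block semi-orthogonality likewise reduces to that of the original collection. The only cosmetic difference is which side of the (two-sided) adjunction you invoke.
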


\begin{proof}
We first prove that every single $F_{i,\rho}$ is weakly exceptional. We have

\begin{align*}
 \Hom^*_{\cD^G}(F_{i,\rho}, F_{i,\rho}) &= \Hom^*_{\cD^G}\bigl(\Ind_{H_i}^G (\rho \otimes_{\fk}\cE_i),\Ind_{H_i}^G (\rho \otimes_{\fk}\cE_i)\bigr)
\\
&\cong \Hom^*_{\cD^{H_i}}\bigl(\rho \otimes_{\fk}\cE_i, \Res_{G}^{H_i}\Ind_{H_i}^{G} (\rho \otimes_{\fk}\cE_i)\bigr) \tag{by adjunction}
 \\
& \cong \Hom^*_{\cD}\bigl(\rho\otimes_{\fk} E_i, \bigoplus{}_{[g]\in G/H_i} ~ g_*(\rho\otimes_{\fk} E_i)\bigr)^{H_i} \tag{by \eqref{eq:Ind}}
\\
 &\cong \Hom^*_{\cD}(\rho\otimes_{\fk} E_i,\rho\otimes_{\fk} E_i)^{H_i} \tag{by \eqref{eq:fullortho}}
 \\ &\cong
\Bigl(\Hom_{\fk}(\rho,\rho)\otimes_{\fk} \Hom^*_{\cD}(E_i,E_i)  \Bigr)^{H_i} \tag{by \eqref{eq:tensoruniv}}\\
&\cong
\Hom_\fk(\rho,\rho)^{H_i} \otimes_{\fk} \fk \cong \Hom_\fk(\rho,\rho)^{H_i}.
\end{align*}
The penultimate isomorphism is due to the fact that the $G$-action on the vector space
$\Hom^*_{\cD}(E_i,E_i)=\fk\cdot \id_{E_i}$ by conjugation with the $G$-linearisation is trivial.\ As $\rho$ is irreducible, Schur's lemma states that $\Hom_{\fk}(\rho,\rho)^{H_i}\cong \Hom_{H_i}(\rho,\rho)$ is a finite-dimensional  division $\fk$-algebra.\ So we proved that $F_{i,\rho}$ is weakly exceptional.\ In the case that $\fk$ is algebraically closed, $\Hom_{H_i}(\rho,\rho)=\fk\cdot \id$, so $F_{i,\rho}$ is exceptional. 
\\
Next, we prove complete orthogonality within each block $(F_{i,\rho})_{\rho\in\irr(H_i)}$.\ So let $\rho\not\cong \rho'$ be two non-isomorphic irreducible $H_i$-representations.\ Via the same computation as above, it follows that
$\Hom^*_{\cD^G}(F_{i,\rho}, F_{i,\rho'})\cong \Hom_{H_i}(\rho,\rho')$.\ This vector space vanishes, again by Schur's lemma.
\\
For the semi-orthogonality between the blocks, let $i>j$, $\rho\in\irr(H_i)$, and $\rho'\in \irr(H_j)$.\ 
Then, by  \eqref{eq:HomG} and \eqref{eq:blocklin}, $\Hom^*_{\cD^G}(F_{i,\rho}, F_{j,\rho'})$ is a vector subspace of
\[
\bigoplus_{\alpha=1}^{\ell_i}\bigoplus_{\beta=1}^{\ell_j}\bigl( \Hom_\fk(\rho,\rho')\otimes \Hom^*_{\cD}(E_{i,\alpha},E_{j,\beta})  \bigr). 
\]
By the semi-orthogonality of the original exceptional sequence \eqref{eq:originalEC}, all summands vanish.
\end{proof}

\begin{remark}
\autoref{lem:inducedEC} shows that exceptional collections in $\cD$ induce $w$-exceptional collections in $\cD^G$.\ This statement becomes wrong if the original sequence (\ref{eq:originalEC}) is only assumed to be $w$-exceptional.\ For example, consider the quaternions $\mathbb{H} \cong_{\mathbb{R}} \langle 1,i,j,k\rangle$ as a division algebra over $\fk = \mathbb{R}$.\ Then the symmetric group $\sym_3$ on three letters acts on $\mathbb{H}$ by permuting $i,j$ and $k$.\ Tensoring this linear group action with the sign representation $\mathfrak{a}_3$ yields a group action on $\mathbb{H}$ that is easily checked to be compatible with the algebra structure of $\mathbb{H}$.\ \par

\par Then the right $\mathbb{H}$-module $E_1=\mathbb{H}$ is a weakly exceptional object in $\Mod(\mathbb{H})\subset D(\mathbb H)$. It alone forms a full strong exceptional sequence in $D(\mathbb H)$, and $E_1=\mathbb{H}$ comes along with the obvious linearisation (\autoref{rem:skewkG}).\ 
However, $F_{1,\rho}=\rho\otimes_\IR \mathbb H$ where $\rho$ is the standard representation of $\sym_3$ is not weakly exceptional.

Indeed, as representations of $\sym_3$ over $\mathbb{R}$, we have $\mathbb{H} \cong \mathfrak{a}_3 \otimes (\mathbb{R}^{\oplus 2} \oplus \rho)$.\ An easy calculation shows that as $\sym_3$-representations,
\begin{align*}
\End^{\ast}_{\mathbb{H}}(\rho \otimes_{\mathbb{R}} \mathbb{H}) &\cong \rho^{\otimes 2}  \otimes_{\mathbb{R}} \mathbb{H} \cong  (\mathbb{R} \oplus \mathfrak{a}_3 \oplus \rho) \otimes_{\mathbb{R}} \mathfrak{a}_3 \otimes_{\mathbb{R}}   (\mathbb{R}^{\oplus 2} \oplus \rho)
\end{align*}
contains the trivial representation $\mathbb{R}$ three times as a summand.\ Hence $\End^{\ast}_{\mathbb{H}}(\rho \otimes_{\mathbb{R}} \mathbb{H})^{\sym_3}$ is $3$-dimensional, therefore not a division algebra over $\mathbb{R}$.

\end{remark}

\subsection{Comparison of the direct summands}

We consider the direct sum of the members of the full exceptional sequence \eqref{eq:originalEC} 
\[
 \IE:= E_{1,1}\oplus\dots\oplus E_{1,\ell_1}\oplus E_{2,1}\oplus\dots\oplus E_{2,\ell_2}\oplus\dots\oplus E_{k,1}\oplus\dots\oplus E_{k,\ell_k}\,.
\]
It is a compact generator of $\cD$, and a tilting object if \eqref{eq:originalEC} is strong.\
Furthermore, $\IE$ is $G$-linearisable by \eqref{eq:blocklin}.\ Hence $T=\IE$ fulfills the assumptions of \autoref{cor:equitilt}.\ Together with \autoref{lem:compact}, this yields the following. 

\begin{cor}\label{cor:specialequitilt}
$\Ind(\IE)$ is a compact generator of $\cD^G$.\ If \eqref{eq:originalEC} is strong, then $\Ind(\IE)$ is a tilting object of $\cD^G$ with 
\[
\End_{\cD^G}(\Ind(\IE))\cong G\ltimes \End_{\cD}(\IE)\,.\]
\end{cor}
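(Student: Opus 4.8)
The plan is to recognize that Corollary~\ref{cor:specialequitilt} is essentially a direct assembly of results already established in the excerpt, so the work is to verify that the hypotheses line up. First I would record that $\IE$ is a compact generator of $\cD$: this is the defining property of a full exceptional collection (\autoref{def:EC}(vi)), since $\IE$ is the direct sum of the members of the full sequence \eqref{eq:originalEC}. Then \autoref{lem:compact}, applied with $G=\IE$ in the notation of that lemma, immediately yields that $\Ind(\IE)$ is a compact generator of $\cD^G$, giving the first assertion unconditionally.

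For the second assertion I would invoke the $G$-linearisability of $\IE$. The point is that \eqref{eq:blocklin} exhibits each block $E_{i,1}\oplus\dots\oplus E_{i,\ell_i}$ as $\Res_G^1\Ind_{H_i}^G\cE_i$, and summing over $i$ presents the whole of $\IE$ as $\Res(\cT)$ for the equivariant object $\cT:=\bigoplus_{i=1}^k \Ind_{H_i}^G\cE_i\in\cD^G$. Thus $\IE$ is $G$-linearisable in the sense of \autoref{cor:IndEnd}. Now assume \eqref{eq:originalEC} is strong. By \autoref{rem:PhiKRS} (or \autoref{rem:ECtilt}), the direct sum of the members of a full strong (weakly) exceptional collection is a tilting object, so $\IE$ is a tilting object of $\cD$. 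This is exactly the hypothesis $T=\Res\cT$ is a tilting object of $\cD$ required by \autoref{cor:equitilt}.

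Applying \autoref{cor:equitilt} with $T=\IE$ then gives at once that $\Ind(\IE)$ is a tilting object of $\cD^G$ together with the endomorphism-algebra identification
\[
\End_{\cD^G}(\Ind(\IE))\cong G\ltimes \End_{\cD}(\IE)\,,
\]
which is the displayed formula. I would note that the $G$-action on $\End_\cD(\IE)$ implicit in the skew group algebra is the one by conjugation with the linearisation of $\cT$, as set up just before \autoref{cor:IndEnd}.

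There is really no serious obstacle here, since the corollary is a bookkeeping combination of \autoref{lem:compact} and \autoref{cor:equitilt} once $\IE$ is seen to be a $G$-linearisable tilting object. The only point that requires a moment's care — and the place I would be most careful — is confirming that the single equivariant object $\cT=\bigoplus_i\Ind_{H_i}^G\cE_i$ simultaneously restricts to all of $\IE$, i.e.\ that the block-by-block linearisations of \eqref{eq:blocklin} genuinely assemble into one $G$-linearisation of the whole sum; this is immediate because $\Res$ is additive and simply forgets the linearisation, so $\Res\cT\cong\bigoplus_i\Res\Ind_{H_i}^G\cE_i\cong\IE$.
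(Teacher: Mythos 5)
Your proposal is correct and matches the paper's own argument: the paper likewise observes that $\IE$ is a compact generator (tilting if \eqref{eq:originalEC} is strong) and is $G$-linearisable via \eqref{eq:blocklin}, then applies \autoref{lem:compact} and \autoref{cor:equitilt}. Your explicit construction of $\cT=\bigoplus_i\Ind_{H_i}^G\cE_i$ just spells out a step the paper leaves implicit.
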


\begin{remark}
If \eqref{eq:originalEC} is strong, then the algebra $\End_\cD(\IE)$ is of finite global dimension; cf.\ \autoref{rem:fingd}.\ Consequently, the same holds true for the skew group algebra $\End_{\cD}(\IE)\rtimes G$; see \cite[Thm.\ 1.1 \& 1.3(c)(i)]{RR--skew}.\ This implies that $\Ind(\IE)$ is also a tilting object in the slightly stronger sense discussed in \autoref{rem:fingd}.   
\end{remark}

We also consider the direct sum of the members of the exceptional sequence \eqref{eq:inducedEC}
\[
\IF:=\bigoplus_{i=1}^k\bigoplus_{\rho\in \irr(H_i)} F_{i,\rho} 
\]
and want to compare this direct sum $\mathbb{F}$ to $\Ind(\IE)$.

\begin{lemma}\label{lem:comparisonsummand}
There is a family of positive integers $(n_{i,\rho})^{i=1,\dots,k}_{\rho\in \irr(H_i)}$ such that 
\[
\Ind(\IE)\cong \bigoplus_{i=1}^k\bigoplus_{\rho\in \irr(H_i)} F_{i,\rho}^{\oplus n_{i,\rho}}\,. 
\]
In particular, $\IF$ is a direct summand of $\Ind(\IE)$, and conversely $\Ind(\IE)$ is a direct summand of $\IF^{\oplus n}$ for some large enough integer $n$.
\end{lemma}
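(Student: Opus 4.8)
The plan is to compute $\Ind(\IE)=\Ind_1^G(\IE)$ block by block, identify the resulting summands with the $F_{i,\rho}$, and read off the multiplicities from the decomposition of the regular representation of $H_i$ into irreducibles. The whole argument is an assembly of identities already present in the excerpt; the only genuinely load-bearing point is positivity of the multiplicities.

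First I would reduce each block to its first member. By the transitivity assumption, every member $E_{i,\alpha}$ of the $i$-th block is of the form $g_*E_{i,1}$ for a suitable $g\in G$. Since $\Ind_1^G(g_*E)\cong \Ind_1^G(E)$ by \autoref{lem:gInd}, we get $\Ind_1^G(E_{i,\alpha})\cong \Ind_1^G(E_{i,1})$ for every $\alpha$, and hence
\[
\Ind_1^G(E_{i,1}\oplus\dots\oplus E_{i,\ell_i})\cong \Ind_1^G(E_{i,1})^{\oplus \ell_i},
\]
where $\ell_i=[G:H_i]$ by the orbit–stabiliser theorem.

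Next I would compute $\Ind_1^G(E_{i,1})$. Using transitivity of induction $\Ind_1^G\cong \Ind_{H_i}^G\circ \Ind_1^{H_i}$ together with the identity $\Ind_1^{H_i}\Res_{H_i}^1\cE_i\cong \fk\langle H_i\rangle\otimes_\fk \cE_i$ (this is the formula from \cite[Prop.\ 4.1]{Elagin--onequi} already used in \autoref{cor:IndEnd}, now applied to the group $H_i$ and the object $\cE_i$, whose underlying object is $E_{i,1}=\Res_{H_i}^1\cE_i$), I obtain $\Ind_1^{H_i}(E_{i,1})\cong \fk\langle H_i\rangle\otimes_\fk\cE_i$. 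Decomposing the regular representation $\fk\langle H_i\rangle\cong\bigoplus_{\rho\in\irr(H_i)}\rho^{\oplus m_{i,\rho}}$ into irreducibles, and using that the $\rep(H_i)$-action on $\cD^{H_i}$ is additive (\autoref{rem:tensorG}) and that $\Ind_{H_i}^G$ commutes with finite direct sums, applying $\Ind_{H_i}^G$ gives
\[
\Ind_1^G(E_{i,1})\cong \bigoplus_{\rho\in\irr(H_i)}\Ind_{H_i}^G(\rho\otimes_\fk\cE_i)^{\oplus m_{i,\rho}}=\bigoplus_{\rho\in\irr(H_i)} F_{i,\rho}^{\oplus m_{i,\rho}}.
\]
Combining this with the blockwise reduction and summing over $i=1,\dots,k$ yields the claimed decomposition with $n_{i,\rho}:=\ell_i\, m_{i,\rho}$, a positive integer. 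The two ``in particular'' assertions are then formal: since every $n_{i,\rho}\geq 1$, splitting off one copy of each $F_{i,\rho}$ exhibits $\IF$ as a direct summand of $\Ind(\IE)$; and with $n:=\max_{i,\rho} n_{i,\rho}$ one has $\Ind(\IE)\cong\bigoplus F_{i,\rho}^{\oplus n_{i,\rho}}$ as a direct summand of $\IF^{\oplus n}$.

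I do not expect a serious obstacle, as the computation is essentially bookkeeping with adjunctions; the single point that must be handled with care is the strict positivity $m_{i,\rho}\geq 1$, i.e.\ that \emph{every} irreducible actually occurs in $\fk\langle H_i\rangle$. This is precisely where semisimplicity of $\fk H_i$ — and thus the hypothesis that $\Char\fk$ is coprime to $|G|$ — is used: without it some irreducible could fail to appear in the regular representation, and the resulting $F_{i,\rho}$ would not show up in $\Ind(\IE)$, breaking the summand comparison.
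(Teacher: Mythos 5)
Your argument is correct and coincides with the paper's own proof: both reduce each block to $\Ind_1^G(E_{i,1})^{\oplus [G:H_i]}$ via \autoref{lem:gInd}, then use $\Ind_1^G\cong\Ind_{H_i}^G\circ\Ind_1^{H_i}$ and $\Ind_1^{H_i}\Res_{H_i}^1\cE_i\cong\fk\langle H_i\rangle\otimes_\fk\cE_i$ together with the decomposition of the regular representation to arrive at $n_{i,\rho}=[G:H_i]\,m_{i,\rho}$. Your explicit remark that positivity of $m_{i,\rho}$ rests on semisimplicity of $\fk H_i$ (hence on the coprimality hypothesis) is a point the paper leaves implicit, but it is the same proof.
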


\begin{proof} Observe first that according to \autoref{lem:gInd}, there is an isomorphism
\begin{equation}\label{eq:step1}
 \Ind(\mathbb E)\cong \bigoplus_{i,j} \Ind(E_{i,j})\cong \bigoplus_i \Ind(E_{i,1})^{\oplus [G:H_i]}\,. 
\end{equation}
Next, note that all irreducible $H_i$-representations show up as direct summands in the regular representation:
\[
 \fk\langle H_i\rangle\cong \bigoplus_{\rho\in \irr(H_i)}\rho^{\oplus m_{i,\rho}}\qquad\text{for some $m_{i,\rho}>0$}.
\]
Concretely, $m_{i,\rho}$ is equal to $\frac{\dim_{\fk}\rho}{\dim_{\fk}\End_{H_i}(\rho)}$. 
Now, using again \cite[Prop.\ 4.1]{Elagin--onequi} gives
\begin{align*}
\Ind(E_{i,1})=\Ind_1^G \Res^1_{H_i} \cE_i &\cong \Ind_{H_i}^G\Ind_1^{H_i} \Res^1_{H_i} \cE_i \\
&\cong \Ind_{H_i}^G \bigl(\fk\langle H_i \rangle\otimes_{\fk} \cE_i\bigr) \\
&\cong\bigoplus_{\rho\in \irr(H_i)} \Ind_{H_i}^G \bigl((\rho\otimes_{\fk} \cE_i)^{\oplus m_{i\rho}}\bigr) \\
&\cong \bigoplus_{\rho\in \irr(H_i)} F_{i,\rho}^{\oplus m_{i\rho}} \,.
\end{align*}
Plugging this into \eqref{eq:step1} gives the assertion with $n_{i,\rho}=[G:H_i]m_{i,\rho}=\frac{[G:H_i]\dim_{\fk}\rho}{\dim_{\fk}\End_{H_i}(\rho)}$.
\end{proof}

\subsection{Proof of the main results}

\begin{theorem}\label{thm:fullEC}
The induced exceptional collection \eqref{eq:inducedEC} in $\cD^G$ of \autoref{lem:inducedEC}
is full.\ If the original exceptional collection \eqref{eq:originalEC} is strong, the same holds for \eqref{eq:inducedEC}.
\end{theorem}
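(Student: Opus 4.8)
The plan is to derive both assertions formally from \autoref{lem:comparisonsummand} and \autoref{cor:specialequitilt}; the substantive computation has already been carried out in those two statements, so only a pair of short direct-summand arguments remains. The common idea is that, although $\IF$ and $\Ind(\IE)$ differ by multiplicities, they have the same indecomposable summands and hence the same formal properties as generators.

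For fullness, I would first observe that $\IF$ is compact: it is a finite direct sum of the objects $F_{i,\rho}=\Ind_{H_i}^G(\rho\otimes_\fk \cE_i)$, each of which is compact because $\Ind$ preserves compactness (as recorded in the proof of \autoref{lem:compact}). By \autoref{cor:specialequitilt}, $\Ind(\IE)$ is a compact generator of $\cD^G$, and by \autoref{lem:comparisonsummand} it is a direct summand of $\IF^{\oplus n}$ for some $n$. Hence, given any $\cZ\in \cD^G$ with $\Hom^*_{\cD^G}(\IF,\cZ)=0$, one gets $\Hom^*_{\cD^G}(\IF^{\oplus n},\cZ)=0$, and since $\Hom^*_{\cD^G}(\Ind(\IE),\cZ)$ is a direct summand of the latter it vanishes too; as $\Ind(\IE)$ generates, $\cZ\cong 0$. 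Thus $\IF$ is a compact generator and \eqref{eq:inducedEC} is full.

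For strongness, suppose \eqref{eq:originalEC} is strong. Then \autoref{cor:specialequitilt} tells us that $\Ind(\IE)$ is a tilting object of $\cD^G$, so $\End^*_{\cD^G}(\Ind(\IE))$ is concentrated in degree zero. Since \autoref{lem:comparisonsummand} exhibits $\IF$ as a direct summand of $\Ind(\IE)$, say $\Ind(\IE)\cong \IF\oplus C$, the graded space $\End^*_{\cD^G}(\IF)$ is a direct summand of $\End^*_{\cD^G}(\Ind(\IE))$ and is therefore concentrated in degree zero as well. Decomposing $\End^*_{\cD^G}(\IF)=\bigoplus \Hom^*_{\cD^G}(F_{i,\rho},F_{j,\rho'})$ shows that every $\Hom^*_{\cD^G}(F_{i,\rho},F_{j,\rho'})$ is concentrated in degree zero; together with the w-exceptional property established in \autoref{lem:inducedEC}, this is precisely what strongness requires.

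The essential content lies entirely in \autoref{lem:comparisonsummand}, which supplies the multiplicity decomposition of $\Ind(\IE)$ into the $F_{i,\rho}$; once that is in hand, both parts are purely formal. The only points needing a moment of care — and neither is a genuine obstacle — are confirming compactness of $\IF$ and using that $\Hom^*$ carries a direct summand in either variable to a direct summand of the resulting graded Hom-space.
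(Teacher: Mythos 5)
Your proof is correct and follows essentially the same route as the paper: both deduce fullness and strongness formally from \autoref{lem:comparisonsummand} and \autoref{cor:specialequitilt} by exploiting that $\IF$ and $\Ind(\IE)$ are mutual direct summands (up to multiplicity). The only cosmetic difference is that you establish compactness of $\IF$ via compactness of each $F_{i,\rho}=\Ind_{H_i}^G(\rho\otimes_\fk\cE_i)$, whereas the paper gets it more directly from $\IF$ being a direct summand of the compact object $\Ind(\IE)$; both are fine.
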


\begin{proof}
By \autoref{lem:comparisonsummand}, $\IF$ is a direct summand of $\Ind(\IE)$.  
As $\Ind(\IE)$ is compact by \autoref{cor:specialequitilt}, the same then holds for $\IF$.

Again by \autoref{cor:specialequitilt} and \autoref{lem:comparisonsummand}, 
$\Ind(\IE)$ is a compact generator of $\cD^G$ and is a direct summand of some power of $\IF$.\ Hence also $\IF$ is a compact generator of $\cD^G$, showing fullness of \eqref{eq:inducedEC} by \autoref{rem:PhiKRS}.

Similarly, \autoref{cor:specialequitilt} and \autoref{lem:comparisonsummand}
show that $\IF$ is a tilting object of $\cD^G$ if the original exceptional sequence \eqref{eq:originalEC} is strong. Again by \autoref{rem:PhiKRS}, this means that also \eqref{eq:inducedEC} is strong in this case.
\end{proof}

\begin{theorem}\label{thm:main}
If \eqref{eq:originalEC} is strong, the endomorphism algebra of the direct sum of the induced strong exceptional sequence \eqref{eq:inducedEC} is the basic reduction
\[
 \End(\mathbb F)\cong \bigr(G\ltimes \End(\mathbb E)\bigl)^b\,.
 \]
\end{theorem}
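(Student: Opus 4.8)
The strategy is to combine the three ingredients already assembled: the fact that $\IF$ is a tilting object with basic endomorphism algebra, the explicit computation $\End_{\cD^G}(\Ind(\IE))\cong G\ltimes \End(\IE)$ from \autoref{cor:specialequitilt}, and the summand comparison of \autoref{lem:comparisonsummand}. First I would invoke \autoref{thm:fullEC}, which guarantees that $\IF$ is a tilting object of $\cD^G$; hence by \autoref{rem:ECtilt} the functor $\Phi=\Hom^\bullet(\IF,\_)$ gives an equivalence $\cD^G\xrightarrow{\cong} D(B)$, where $B:=\End(\IF)$, sending $\IF$ to the free module $B$. Since \eqref{eq:inducedEC} is a strong w-exceptional collection, \autoref{lem:ECbasic} tells us that $B$ is already \emph{basic}. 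So it remains only to show that $B$ is Morita equivalent to $G\ltimes \End(\IE)$, and then apply \autoref{propdef:basic}, which says the basic reduction is the \emph{unique} basic algebra in the Morita class.

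The key step is to transport $\Ind(\IE)$ through the equivalence $\Phi$ and identify its image as a progenerator of $\Mod(B)$. By \autoref{lem:comparisonsummand}, $\Ind(\IE)\cong\bigoplus_{i,\rho}F_{i,\rho}^{\oplus n_{i,\rho}}$ with all $n_{i,\rho}>0$, so $\IF$ and $\Ind(\IE)$ have exactly the same indecomposable summands (up to multiplicity) in their Krull--Remak--Schmidt decompositions. Applying $\Phi$, the module $P:=\Phi(\Ind(\IE))$ therefore has the same indecomposable summands as $\Phi(\IF)\cong B$. Concretely, $\Phi(\IF)\cong B$ decomposes as $\bigoplus_{i,\rho}\Phi(F_{i,\rho})$, and $P\cong\bigoplus_{i,\rho}\Phi(F_{i,\rho})^{\oplus n_{i,\rho}}$; since every summand of $B$ thus appears in $P$ and $P$ is a sum of summands of $B$, the module $P$ is a progenerator of $\Mod(B)$ (it is a finitely generated projective generator).

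The final step is the standard Morita conclusion. For a progenerator $P$ over $B$, the endomorphism algebra $\End_B(P)$ is Morita equivalent to $B$, with the equivalence $\Mod(B)\cong\Mod(\End_B(P))$ realised by $\Hom_B(P,\_)$. On the other hand, since $\Phi$ is an equivalence, $\End_B(P)\cong\End_{\cD^G}(\Ind(\IE))\cong G\ltimes\End(\IE)$ by \autoref{cor:specialequitilt}. Therefore $B=\End(\IF)$ is Morita equivalent to $G\ltimes\End(\IE)$, and being basic, it coincides with the basic reduction $(G\ltimes\End(\IE))^b$, which is exactly the claim. The main obstacle is purely bookkeeping: one must be careful that $\Phi$ genuinely matches indecomposable summands on the nose so that $P$ is seen to be a progenerator, rather than merely a generator or merely projective; this is where the positivity of all the multiplicities $n_{i,\rho}$ in \autoref{lem:comparisonsummand} is essential, and it is the one place where the argument would break if some $F_{i,\rho}$ failed to appear in $\Ind(\IE)$.
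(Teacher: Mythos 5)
Your proposal is correct and follows essentially the same route as the paper's proof: both use \autoref{lem:comparisonsummand} to identify $\Phi(\Ind(\IE))$ as a progenerator of $\Mod(\End(\IF))$, deduce the Morita equivalence with $G\ltimes\End(\IE)$ via \autoref{cor:specialequitilt} and full faithfulness of $\Phi$, and conclude with \autoref{lem:ECbasic} and the uniqueness statement of \autoref{propdef:basic}. Your closing remark about the positivity of the multiplicities $n_{i,\rho}$ correctly identifies the crucial point of the argument.
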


\begin{proof}
By \autoref{rem:PhiKRS} and \autoref{thm:fullEC}, an equivalence $\Phi:=\Hom^\bullet(\IF,\_)\colon \cD\xrightarrow\cong D(\End(\IF))$ exists with the property that
\[
 \End(\IF)\cong \bigoplus_{i,\rho} \Phi(F_{i,\rho})
\]
is the Krull--Remak--Schmidt decomposition of $\End(\IF)$ as a right module over itself.\ According to \autoref{lem:comparisonsummand}, we have 
\[
 \Phi(\Ind\mathbb E)\cong \bigoplus_{i,\rho} \Phi(F_{i,\rho})^{\oplus n_{i,\rho}}\quad\text{for some $n_{i,\rho}\ge 1$}\,.
\]
This means that $\Phi(\Ind\mathbb E)$ is a progenerator in the category of right $\End(\IF)$-modules.\ Hence, $\End(\IF)$ and $\Hom_{\End(\IF)}\bigl(\Phi(\Ind\mathbb E), \Phi(\Ind\mathbb E)\bigr)$ are Morita equivalent. By fully faithfulness of $\Phi$ together with \autoref{cor:specialequitilt}, the latter algebra is isomorphic to the skew group algebra:
\[
\Hom_{\End(\IF)}\bigl(\Phi(\Ind\mathbb E), \Phi(\Ind\mathbb E)\bigr)\cong \End_{\cD^G}(\Ind \IE)\cong  G\ltimes \End(\IE)\,.
\]
So $\End(\IF)$ is Morita equivalent to $G\ltimes \End(\IE)$ and by \autoref{lem:ECbasic} it is basic.\ Hence the result follows from \autoref{propdef:basic}.
\end{proof}

\section{Further Remarks and Examples}

\subsection{Morita equivalence vs.\ derived equivalence}

As explained in the introduction, the main point of our proof of \autoref{thm:main} is to construct a Morita equivalence between $\End(\IF)$ and the skew group algebra $\End(\IE)\rtimes G$.

At least a derived equivalence $D(\End(\IF))\cong
D(\End(\IE)\rtimes G)$ already follows from general theory, without any need for the concrete computations in the proofs of \autoref{lem:skewashominva}, \autoref{cor:IndEnd}, and \autoref{lem:comparisonsummand}:

By \autoref{rem:ECtilt}, there is an equivalence
$\Psi:=\Hom^\bullet(\IE,\_)\colon \cD\xrightarrow\sim D(\End(\IE))$.
One checks that this induces an equivalence of the associated $G$-equivariant categories $\cD^G\cong D(\End(\IE))^G$.\ Combining this with \autoref{lem:skewequivalence} gives $\cD^G\cong D(G \ltimes \End(\IE))$.\ Under this equivalence, the tilting object $\IF\in \cD^G$ corresponds to some tilting object $\IF'\in D(G \ltimes \End(\IE))$ with the same endomorphism algebra. By \autoref{rem:ECtilt}, we get $D(\End(\IF))\cong
D(G \ltimes \End(\IE))$.

However, derived equivalence is a strictly weaker notion than Morita equivalence.\ In particular, there exist non-isomorphic but still derived equivalent basic algebras; see \cite{Happel--derivedfdalg},  \cite[Sect.\ 2.8 \& 2.9]{Keller--handbook}.

\subsection{Non-full exceptional sequences}
In \autoref{subsect:setup}, we make the general assumption that the exceptional sequence \eqref{eq:originalEC} that we start with is full. This is not really a restriction.

If we have an exceptional sequence satisfying all the assumptions
\autoref{subsect:setup} except that it is not full, we can replace $\cD$ by the thick, cocomplete triangulated subcategory $\langle\!\langle\IE \rangle\!\rangle\subset \cD$ generated by the exceptional sequence.

Then, inside $\langle\!\langle\IE \rangle\!\rangle$, the exceptional collection is full. Furthermore, $\langle\!\langle\IE \rangle\!\rangle$ is again dg-enhanced since $\langle\!\langle\IE \rangle\!\rangle\cong D(\End^\bullet(\IE))$; see the references in the proof of \autoref{DGequivalence}, or \cite[Thm.\ 1.10]{HK--P} for the exact formulation used here. Hence, we can apply our results to $\langle\!\langle\IE \rangle\!\rangle$ in place of $\cD$.

\subsection{Hilbert schemes of points on surfaces and symmetric quotient stacks} \label{subsect:symmquotstacks}

We come back to the example of \cite[Sect.\ 4]{KSos} mentioned in the introduction. Let $S$ be a smooth projective variety with an exceptional collection $(U_1, \dots ,U_\ell)$ in $D^b(S)\subset D(\QCoh(S))$.

Fix some $n\ge 2$, and let $X=S^n$ with the symmetric group $G=\sym_n$ acting on $X$ by permutation of the factors. Then one checks that the
objects
\[
 E(i_1,\dots, i_n):=E_{i_1}\boxtimes \dots \boxtimes E_{i_n}\in D^b(X)
\]
with $(i_1,\dots, i_n)$ going through the set $\{1,\dots,\ell\}^n$
with the lexicographic order form an exceptional sequence satisfying the assumptions of \autoref{thm:intro1} or, equivalently, the assumptions of \autoref{subsect:setup}. Hence, there is an induced exceptional sequence on the equivariant derived category $D^b_{\sym_n}(S^n)$ whose direct sum we denote by $\IF$. If $S$ is a surface, the derived McKay correspondence of Bridgeland--King--Reid \cite{BKR} and Haiman \cite{Hai} gives an equivalence $D^b_{\sym_n}(S^n)\cong D^b(S^{[n]})$, so we also get an induced exceptional sequence on the Hilbert scheme of points $S^{[n]}$.

Now write $\IU:=U_1\oplus\dots\oplus U_n$, $A:=\End(\IU)$ and
\[
 \IE:=\bigoplus_{(i_1,\dots, i_n)\in \{1,\dots,\ell\}^n} E(i_1,\dots,i_n)\,.
\]
Then we have $\IE\cong \IU^{\boxtimes n}$. Hence the K\"unneth formula yields $\End(\IE)\cong A^{\boxtimes n}$ with the induced $\sym_n$-action given by permutation of the tensor factors.\ By \autoref{thm:intro2}, we conclude that
\[
 \End(\IF)\cong \bigl(\sym_n\ltimes (A^{\otimes n})\bigr)^b = (\sym_n\wr A)^b
\]
is the basic reduction of the $n$-fold \emph{wreath product} algebra.

\subsection{The case of path algebras without relations}

Let $\fk=\bar\fk$ be algebraically closed.\ Then the path algebras of acyclic quivers without relations are exactly the finite-dimensional basic \emph{hereditary} algebras.
Let a finite group $G$ act on $A=\fk Q$ by algebra automorphisms for some acyclic quiver $Q=(Q_0,Q_1)$.\
By \cite[Thm.\ 1.1 \& 1.3(c)(i)]{RR--skew}, the associated skew group algebra $G\ltimes A$ is still hereditary.\ Therefore its basic reduction is again a path algebra of some acyclic quiver, say $Q_G$. 

Under the further assumption that the action permutes the trivial paths $e_i$ and preserves the subset $\fk\langle Q_1\rangle\subset \fk Q$ of linear combinations of arrows, the quiver $Q_G$ was computed in \cite[Thm.\ 1]{Demonet}.\ Our results recover this description as we explain in the following.\ Note however that this does not reprove the full result of \cite[Thm.\ 1]{Demonet}, since \emph{loc.\ cit}.\ is also true for cyclic quivers. 

Anyway, for our acyclic quiver $Q$, we have a full strong exceptional collection $(P(i))_{i\in Q_0}$ of $D(A)$ consisting of the indecomposable projectives $P(i):=e_iA$ associated to the vertices $i\in Q_0$.\ The vertices have to be ordered in such a way that $i\leq j$ whenever there is a path from $i$ to $j$ because $\Hom_A^{\ast}(P(i),P(j)) \cong e_jAe_i$.\ 
By assumption, there is a $G$-action on $Q_0$ such that $(e_i)^g=e_{ig}$ for all $g\in G$ and $i\in Q_0$. The action of every $g \in G$ on $A$ restricts to an isomorphism $g\colon P(i)\xrightarrow\sim g_*P(ig)$. In particular, every $P(i)$ admits a $\Stab_G(i)$-linearisation.

Hence, the full strong exceptional sequence $(P(i))_{i\in Q_0}$ satisfies the assumptions of \autoref{subsect:setup} with one block for every $G$-orbit of $Q_0$. \autoref{thm:fullEC} then gives a full exceptional collection $(F_{i,\rho})$ in $D(\fk Q)^G\cong D(G\ltimes \fk Q)$ where $i$ runs through a set of representatives of the $G$-orbits of $Q_0$ and $\rho$ runs through the irreducible representations of $\Stab_G(i)$.

By \autoref{rem:PhiKRS} and \autoref{thm:main}, the $F_{i,\rho}$ are in bijection with the indecomposable projectives in $(G\ltimes \fk Q)^b$ which in turn correspond to the vertices of the quiver $Q_G$ with $(G\ltimes \fk Q)^b\cong \fk Q_G$.
In summary, there is a bijection
\[
 (Q_G)_0\cong \coprod_{i} \irr(\Stab_G(i)),
\]
where $i$ runs through a set of representatives of the $G$-orbits of $Q_0$.\ This agrees with the description of the vertex set in \cite{Demonet}.\ With a little more work, the description of the arrows $(Q_G)_1$ in \emph{loc.\ cit.}\ can also be recovered by computing Hom spaces between the $F_{i,\rho}$.

\bibliographystyle{alphaurl}
\addcontentsline{toc}{chapter}{References}
\bibliography{references}

\end{document}